\theoremstyle{plain}
\newtheorem{theorem}{Theorem}[section]
\newtheorem{thm}{Theorem}[section]
\newtheorem{lemma}[thm]{Lemma}
\newtheorem{corollary}[thm]{Corollary}
\newtheorem{proposition}[thm]{Proposition}
\newtheorem{definition}[thm]{Definition}
\newcommand{\gothG}{\mathfrak{g}}
\newcommand{\gothQ}{\mathfrak{q}}
\newcommand{\gothB}{\mathfrak{b}}
\newcommand{\gothH}{\mathfrak{h}}
\newcommand{\gothP}{\mathfrak{p}}
\newcommand{\gothL}{\mathfrak{l}}
\newcommand{\codim}{{\rm codim}}
\newcommand{\coker}{{\rm coker\, }}
\newcommand{\Lie}{{\rm Lie}}
\newcommand{\tilG}{\tilde{G}}
\newcommand{\tilH}{\tilde{H}}
\newcommand{\tilB}{\tilde{B}}
\newcommand{\tilP}{\tilde{P}}
\newcommand{\tilR}{\tilde{R}}
\newcommand{\tilQ}{\tilde{Q}}
\newcommand{\tilW}{\tilde{W}}
\newcommand{\Te}{T_{eP}}
\begin{document}

\title[A multiplicative formula for structure constants]{A multiplicative formula for structure constants in the cohomology of flag varieties}

\author{Edward Richmond}

\address{Department of Mathematics, University of British Columbia, Vancouver, BC V6T 1Z2, Canada}
\email{erichmond@math.ubc.ca}


\begin{abstract}
Let $G$ be a complex semisimple Lie group and let $P,Q$ be a pair of parabolic subgroups of $G$ such that $Q$ contains $P$.  Consider the flag varieties $G/P$, $G/Q$ and $Q/P$. We show that certain structure constants in $H^*(G/P)$ with respect to the Schubert basis can be written as a product of structure constants coming from $H^*(G/Q)$ and $H^*(Q/P)$ in a very natural way. The primary application is to compute Levi-movable structure constants defined by Belkale and Kumar in \cite{BK06}.  We also give a generalization of this product formula in the branching Schubert calculus setting.
\end{abstract}





\maketitle

\section{Introduction}

Let $G$ be a connected, simply connected, semisimple complex algebraic group and let $P\subseteq Q$ be a pair of parabolic subgroups.  Consider the induced sequence of flag varieties \begin{equation}\label{flag sequence}Q/P\hookrightarrow G/P\twoheadrightarrow G/Q.\end{equation}
The goal of this paper is to give a simple multiplicative formula connecting the structure coefficients for the cohomology ring of the three flag varieties in \eqref{flag sequence} with respect to their Schubert bases.  Let $W$ be the Weyl group of $G$ and let $W_P\subseteq W_Q\subseteq W$ denote the Weyl groups of $P$ and $Q$ respectively.  Let $W^P\subseteq W$ denote the set of minimal length coset representatives in $W/W_P.$  For any $w\in W^P$, let $\bar X_w\subseteq G/P$ denote the corresponding Schubert variety and let $[X_w]\in H^*(G/P)=H^*(G/P,\mathds{Z})$ denote the Schubert class of $\bar X_w$.  It is well known that the Schubert classes $\{[X_w]\}_{w\in W^P}$ form an additive basis for cohomology.  Similarly, we have Schubert classes $[X_u]\in H^*(G/Q)$ for $u\in W^Q$ and $[X_v]\in H^*(Q/P)$ for $v\in W^P\cap W_Q$.  The letters $w, u, v$ will be used to denote Schubert varieties in $G/P$, $G/Q$ and $Q/P$ respectively.   In Lemma \ref{lemma_wuv}, we show that for any $w\in W^P$, there is a unique decomposition $w=uv$ where $u\in W^Q$ and $v\in W^P\cap W_Q$.  Fix $s\geq 2$ and for any $w_1,\ldots,w_s\in W^P$ such that $\sum_{k=1}^s\codim\ X_{w_k}=\dim G/P$, define the associated structure coefficient (or structure constant) to be the integer $c_w$ where $$[X_{w_1}]\cdots[X_{w_s}]=c_w[pt]\in H^*(G/P).$$  The following is the first result of this paper:

\begin{theorem}\label{Thm1}Let $w_1, \ldots, w_s\in W^P$ and let $u_k\in W^Q, v_k\in W^P\cap W_Q$ be defined by $w_k=u_kv_k$.  Assume that \begin{equation}\label{codim condition}\sum_{k=1}^s\codim\ X_{w_k}=\dim G/P\quad \mbox{and}\quad \sum_{k=1}^s\codim\ X_{u_k}=\dim G/Q.\end{equation}  If $c_w,c_u,c_v\in\mathds{Z}_{\geq 0}$ are defined by:
$$\prod_{k=1}^s[X_{w_k}]=c_w[pt],\quad \prod_{k=1}^s[X_{u_k}]=c_u[pt],\quad \prod_{k=1}^s[X_{v_k}]=c_v[pt]$$ in $H^*(G/P), H^*(G/Q), H^*(Q/P)$ respectively, then $c_w=c_u\cdot c_v$. \end{theorem}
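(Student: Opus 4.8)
The plan is to compute all three structure constants as transversal intersection numbers and to show that the intersection defining $c_w$ fibres, via the bundle map $\pi\colon G/P\to G/Q,\ gP\mapsto gQ$, over the intersection defining $c_u$, with each fibre a copy of the intersection defining $c_v$. For preliminaries: $\pi$ is a $G$-equivariant, Zariski-locally trivial fibre bundle with fibre $\pi^{-1}(eQ)=Q/P$; for $w=uv$ as in Lemma~\ref{lemma_wuv} one has $\pi(\bar X_{uv})=\bar X_u$, since $\pi(BuvP/P)=BuQ/Q$; and subtracting the two equalities in \eqref{codim condition} and using $\codim\ X_{w_k}-\codim\ X_{u_k}=\dim(Q/P)-\ell(v_k)$ gives $\sum_k\codim\ X_{v_k}=\dim(Q/P)$, so $c_v$ is again a ``top'' coefficient and the three flag varieties enter symmetrically.

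\emph{Key lemma.} Over the open Schubert cell $X_u=BuQ/Q\subseteq\bar X_u$, the restriction $\pi\colon\bar X_{uv}\to\bar X_u$ is a fibre bundle with fibre $\bar X_v\subseteq Q/P$. Indeed $\pi^{-1}(BuQ/Q)=BuQ/P$ is the union of the cells $X_{uv'}$ with $v'\in W^P\cap W_Q$; since $v'\mapsto uv'$ is an order embedding of $W_Q$ into $W$ for $u\in W^Q$, intersecting with $\bar X_{uv}$ leaves exactly the cells with $v'\le v$; and the resulting $B$-stable subvariety of the $(Q/P)$-bundle $BuQ/P\to BuQ/Q$ has fibre over $uQ$ equal to $\overline{(u^{-1}Bu\cap L_Q)\,vP/P}=\bar X_v^{Q/P}$, where $L_Q$ is a Levi of $Q$ and we use $u^{-1}Bu\cap L_Q=B\cap L_Q$ for $u\in W^Q$. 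Concretely: if $y\in\bar X_u$ lies in this open cell, then the slice $g\bar X_{uv}\cap\pi^{-1}(y)$ is, inside $\pi^{-1}(y)\cong Q/P$, the Schubert variety $\bar X_v$ for the flag that $gB$ induces on the Levi of $\mathrm{Stab}(y)$.

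\emph{Proof.} By Kleiman's transversality theorem, for $(g_1,\dots,g_s)$ in a dense open $\Omega\subseteq G^s$ the scheme $\bigcap_k g_k\bar X_{w_k}$ is reduced of length $c_w$, $\bigcap_k g_k\bar X_{u_k}$ is $c_u$ reduced points, and a generic $s$-tuple of translates of the $\bar X_{v_k}$ inside $Q/P$ (equivalently, inside any fibre of $\pi$) meets in $c_v$ reduced points. Shrinking $\Omega$, each of the $c_u$ points $y_j\in\bigcap_k g_k\bar X_{u_k}$ may be assumed to lie in the open cell $g_kX_{u_k}$ for every $k$ (the lower-dimensional boundary $\bar X_{u_k}\setminus X_{u_k}$ misses $\bigcap_{l\ne k}g_l\bar X_{u_l}$ for generic $g_k$, by a dimension count). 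Since $\pi\bigl(\bigcap_k g_k\bar X_{w_k}\bigr)\subseteq\{y_1,\dots,y_{c_u}\}$,
$$\bigcap_{k}g_k\bar X_{w_k}\ =\ \bigsqcup_{j=1}^{c_u}\ \bigcap_{k}\bigl(g_k\bar X_{w_k}\cap\pi^{-1}(y_j)\bigr),$$
and by the key lemma each inner intersection is an intersection of translates of the $\bar X_{v_k}$ inside $\pi^{-1}(y_j)\cong Q/P$; tracking the genericity of the $g_k$, this $s$-tuple of translates is generic, so the inner intersection is $c_v$ reduced points. The local product structure of $\pi$ over the cells shows these are also reduced in $G/P$, hence exhaust $\bigcap_k g_k\bar X_{w_k}$. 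Counting, $c_w=\sum_{j=1}^{c_u}c_v=c_u\cdot c_v$. (In cohomological language this is the identity $\pi_*\bigl([X_{w_1}]\cdots[X_{w_s}]\bigr)=c_v\cdot[X_{u_1}]\cdots[X_{u_s}]$ in $H^*(G/Q)$, combined with $\int_{G/P}=\int_{G/Q}\circ\,\pi_*$.)

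\emph{Main obstacle.} The delicate point is the genericity bookkeeping in the proof: one must show that a generic $s$-tuple of translates in $G/P$ induces, on each relevant fibre $\pi^{-1}(y_j)$, an $s$-tuple of translates still generic enough to force $\bigcap_k\bar X_{v_k}$ to be transverse in $Q/P$. I would handle this by passing to the incidence variety $\{((g_k),y)\in G^s\times G/Q:\ y\in\bigcap_k g_k\bar X_{u_k}\}$ and checking, via $Q$-equivariance on the fibres of $\pi$ and transitivity of $Q$ on its Borel subgroups, that the map sending $((g_k),y)$ to the tuple of flags on the Levi of $\mathrm{Stab}(y)$ is dominant; generic smoothness then reduces everything to a single application of Kleiman's theorem inside $Q/P$. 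The reducedness in $G/P$ is a separate, easier check, following from local triviality of $\pi$ over the Schubert cells and the transversality already obtained in the base and in the fibre.
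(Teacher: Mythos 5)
Your proposal follows essentially the same route as the paper: project along $\pi\colon G/P\to G/Q$, use the identification (your ``key lemma,'' which is the paper's Lemma~\ref{Lemma_intersection}) of the slice $g_k\bar X_{w_k}\cap\pi^{-1}(y)$ with a translate of $\bar X_{v_k}$ in $Q/P$ whenever $y\in g_kX_{u_k}$, and control the genericity of the induced $s$-tuple of translates in the fibre via an incidence variety, $Q$-equivariance, and Kleiman transversality inside $Q/P$. The only place your outline is imprecise is the ``main obstacle'' paragraph: ``generic smoothness'' is not the mechanism the paper invokes. What the paper actually does is show the incidence variety $Y$ (built from the open cells $X_{u_k}$, not the closures --- that detail is what makes $Y\cong G\times_Q\prod_k Qu_k^{-1}B/B$ smooth and irreducible) maps $G$-equivariantly to $G\times_Q(Q/B)^s$, pull back a $Q$-stable Kleiman-generic open set $O$ to get the good locus $Y^\circ\subseteq Y$, and then argue by a dimension count: since $c_u\neq 0$ the projection $\sigma\colon Y\to(G/B)^s$ is dominant with generically finite fibres, so $\dim Y=\dim(G/B)^s$ and $\dim\overline{\sigma(Y\setminus Y^\circ)}<\dim(G/B)^s$, which is exactly what forces every fibre over a generic $(\bar g_1,\ldots,\bar g_s)$ to lie in $Y^\circ$. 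With that substitution your argument is the paper's proof.
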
 Note that the dimensional conditions in \eqref{codim condition} imply that $\sum_{k=1}^s\codim\ X_{v_k}=\dim Q/P$ and hence the associated structure constant $c_v$ is well defined.

\smallskip

To prove Theorem \ref{Thm1}, we study the geometry of \eqref{flag sequence}.  Fix a maximal torus $H$ and and Borel subgroup $B$ such that $H\subseteq B\subseteq P$.  It is well known that if $\prod_{k=1}^s[X_{w_k}]=c_w[pt]$, then the number of points in the intersection of generic translates \begin{equation}\label{intersection size}\left|g_1X_{w_1}\cap\cdots\cap g_sX_{w_s}\right|=c_w.\end{equation}  We show that for a generic choice of $(\bar g_1,\ldots,\bar g_s)\in (G/B)^s$, the intersection given in \eqref{intersection size} projects onto the intersection $\bigcap_{k=1}^sg_kX_{u_k}\subseteq G/Q$ with each fiber of the projection containing exactly $c_v$ points.  The techniques used in the proof are inspired by Belkale's work in \cite{Be06}.

\subsection{Levi-movability}\label{section Lmov}

The main application of Theorem \ref{Thm1} is to show that the product formula applies to``Levi-movable" $s$-tuples $(w_1,\ldots,w_s)\in (W^P)^s$.  Let $L_P$ denote the Levi subgroup of $P$ containing $H$.  Belkale and Kumar give the following definition in \cite{BK06}.

\begin{definition}The $s$-tuple $(w_1,\ldots,w_s)\in(W^P)^s$ is Levi movable or $L_P$-movable if $$\sum_{k=1}^s\codim\ X_{w_k}=\dim G/P$$ and for generic $(l_1,\ldots,l_s)\in (L_P)^s$ the intersection $$l_1w_1^{-1}X_{w_1}\cap\cdots\cap l_sw_s^{-1}X_{w_s}$$ is transverse at $eP\in G/P$.\end{definition}

If $(w_1,\ldots, w_s)$ is Levi-movable, then the associated structure constant is not zero. These statements become equivalent if we also assume that $(w_1,\ldots, w_s)$ satisfies a system of linear equalities given in \cite[Theorem 15 (b)]{BK06} (these equalities are also given later in Proposition \ref{Bel_Ku Thm}).  The following is the second result of this paper.

\begin{theorem}\label{Thm2}Let $(w_1,\ldots,w_s)$ be $L_P$-movable and let $u_k\in W^Q, v_k\in W^P\cap W_Q$ be defined by $w_k=u_kv_k$.  The following are true:
\begin{enumerate}\item[(i)]$(u_1,\ldots,u_s)$ is $L_Q$-movable \item[(ii)] $(v_1,\ldots,v_s)$ is $L_{(L_Q\cap P)}$-movable.\end{enumerate}\end{theorem}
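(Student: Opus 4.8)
The plan is to work entirely inside $T_{eP}(G/P)=\gothG/\gothP$ and to exploit the canonical $L_P$-equivariant splitting of this space induced by the chain $\gothP\subseteq\gothQ\subseteq\gothG$. Set $N_P=\dim G/P$, $N_Q=\dim G/Q$, $N_{Q/P}=\dim Q/P$, fix a root system $R$ of $G$ with positive roots $R^+$ and simple roots $\Delta$, and let $R_P\subseteq R_Q\subseteq R$ be the roots of the Levi factors. I will use the standard facts that $L_P\subseteq L_Q$, that $Q/P\cong L_Q/(L_Q\cap P)$ where $L_Q\cap P$ is a parabolic subgroup of $L_Q$, and that the Levi factor of $L_Q\cap P$ containing $H$ is exactly $L_P$; thus being $L_{(L_Q\cap P)}$-movable means being $L_P$-movable for the $L_Q$-flag variety $L_Q/(L_Q\cap P)$. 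Each translate $l_kw_k^{-1}X_{w_k}$ passes through $eP$ and is smooth there (it contains the Schubert cell through $eP$), so, given the codimension hypothesis $\sum_k\codim X_{w_k}=N_P$, being $L_P$-movable is equivalent to
$$\bigcap_{k=1}^s\mathrm{Ad}(l_k)\,T_{eP}(w_k^{-1}X_{w_k})=0\ \text{ in }\ \gothG/\gothP\quad\text{for generic }(l_1,\dots,l_s)\in (L_P)^s,$$
and the analogous reformulations hold for the $u_k$ in $\gothG/\gothQ$ and for the $v_k$ in $\gothL_Q/(\gothL_Q\cap\gothP)$.

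The first step is the splitting. The chain $\gothP\subseteq\gothQ\subseteq\gothG$ gives a short exact sequence of $L_P$-modules $0\to\gothQ/\gothP\to\gothG/\gothP\to\gothG/\gothQ\to0$, and on $H$-weight spaces it splits as $\gothG/\gothP=(\gothQ/\gothP)\oplus\mathfrak{m}$ with $\mathfrak{m}=\bigoplus_{\alpha\in R^-\setminus R_Q^-}\gothG_\alpha$; a short root computation (roots of $R_Q$ have trivial support on $\Delta\setminus\Delta_Q$) shows $\mathfrak{m}$ is $\gothL_Q$-stable, hence $L_P$-stable, and $\mathfrak{m}\cong\gothG/\gothQ$ as $L_Q$-modules. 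Using $w_k=u_kv_k$ with $u_k\in W^Q$, $v_k\in W^P\cap W_Q$ and $\ell(w_k)=\ell(u_k)+\ell(v_k)$ (Lemma \ref{lemma_wuv}), I would then show that each Schubert tangent space respects this splitting:
$$T_{eP}(w_k^{-1}X_{w_k})=\mathfrak{V}_k\oplus\mathfrak{M}_k,\qquad\mathfrak{V}_k:=T_{eP}(w_k^{-1}X_{w_k})\cap(\gothQ/\gothP),\quad\mathfrak{M}_k\subseteq\mathfrak{m},$$
where $\mathfrak{V}_k$ is precisely the tangent space at the base point to $v_k^{-1}X_{v_k}$ in $L_Q/(L_Q\cap P)$, and $\mathfrak{M}_k$, under $\mathfrak{m}\cong\gothG/\gothQ$, equals $\mathrm{Ad}(\dot v_k)^{-1}T_{eQ}(u_k^{-1}X_{u_k})$ for a chosen representative $\dot v_k\in N_{L_Q}(H)$ of $v_k$. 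Concretely, with $D_w=\{\beta\in R^-:w\beta\in R^+\}$ so that $T_{eP}(w^{-1}X_w)=\bigoplus_{\beta\in D_w}\gothG_\beta$, one checks $D_{w_k}\cap R_Q^-=D_{v_k}\subseteq R_Q^-\setminus R_P^-$ and $D_{w_k}\setminus R_Q^-=v_k^{-1}D_{u_k}$ with $v_k^{-1}D_{u_k}\subseteq R^-\setminus R_Q^-$; this is where the length additivity and the defining properties of $W^Q$ and $W^P\cap W_Q$ enter.

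Since $L_P$ preserves both summands of $\gothG/\gothP=(\gothQ/\gothP)\oplus\mathfrak{m}$, for $(l_k)\in(L_P)^s$ we have $\mathrm{Ad}(l_k)T_{eP}(w_k^{-1}X_{w_k})=\mathrm{Ad}(l_k)\mathfrak{V}_k\oplus\mathrm{Ad}(l_k)\mathfrak{M}_k$, and by uniqueness of direct-sum decompositions
$$\bigcap_{k=1}^s\mathrm{Ad}(l_k)\,T_{eP}(w_k^{-1}X_{w_k})=\Big(\bigcap_{k=1}^s\mathrm{Ad}(l_k)\mathfrak{V}_k\Big)\oplus\Big(\bigcap_{k=1}^s\mathrm{Ad}(l_k)\mathfrak{M}_k\Big).$$
Hence being $L_P$-movable decouples into the two generic-vanishing statements $\bigcap_k\mathrm{Ad}(l_k)\mathfrak{V}_k=0$ and $\bigcap_k\mathrm{Ad}(l_k)\mathfrak{M}_k=0$ over generic $(l_k)\in(L_P)^s$. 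The first is exactly the base-point transversality required in (ii). For (i), set $g_k:=l_k\dot v_k^{-1}\in L_Q$: the second statement produces a single tuple $(g_1,\dots,g_s)\in(L_Q)^s$ with $\bigcap_k\mathrm{Ad}(g_k)T_{eQ}(u_k^{-1}X_{u_k})=0$, and since this is a Zariski-open condition on $(g_k)$ while $(L_Q)^s$ is irreducible, it then holds for generic $(g_k)\in(L_Q)^s$ — the base-point transversality required in (i).

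What remains, and what I expect to be the only non-formal point, is the codimension hypotheses $\sum_k\codim X_{u_k}=N_Q$ and $\sum_k\codim X_{v_k}=N_{Q/P}$ needed to even speak of $(u_k)$ and $(v_k)$ being Levi-movable. From $\ell(w_k)=\ell(u_k)+\ell(v_k)$, $N_P=N_Q+N_{Q/P}$, and $\sum_k\codim X_{w_k}=N_P$ one obtains $\sum_k\codim X_{u_k}+\sum_k\codim X_{v_k}=N_P$, so it is enough to bound each summand below. Applying the elementary inequality $\dim\bigcap_kA_k\ge\sum_k\dim A_k-(s-1)\dim V$ to the two vanishing intersections (with $\dim\mathfrak{V}_k=\ell(v_k)$, $\dim\mathfrak{M}_k=\ell(u_k)$, and ambient dimensions $N_{Q/P}$ and $N_Q$) forces $\sum_k\codim X_{v_k}\ge N_{Q/P}$ and $\sum_k\codim X_{u_k}\ge N_Q$, so both are in fact equalities. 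This is precisely the place where one uses that the intersections vanish, not merely that they are generically finite — a priori one of these inequalities could go the other way. Given the codimension conditions, the tangent-space assertions above yield (i) and (ii). Alternatively one could deduce Theorem \ref{Thm2} from the Belkale--Kumar criterion of Proposition \ref{Bel_Ku Thm} together with Theorem \ref{Thm1}: the linear equalities for $w=uv$ restrict to those for $u$ and for $v$, while $c_w=c_u\cdot c_v$ forces $c_u,c_v\ne0$; the route sketched here has the advantage of not using Theorem \ref{Thm1}.
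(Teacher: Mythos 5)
Your proof is correct, but it takes a genuinely different and in some ways more self-contained route than the paper's. The paper's proof works with the commutative diagram of quotient maps
\begin{equation*}
\phi:T^P\rightarrow\bigoplus_k T^P/l_kT^P_{w_k},\qquad
\phi_1:T^Q\rightarrow\bigoplus_k T^Q/l_kv_k^{-1}T^Q_{u_k},\qquad
\phi_2:T^{P,Q}\rightarrow\bigoplus_k T^{P,Q}/l_kT^{P,Q}_{v_k},
\end{equation*}
deduces surjectivity of $\phi_1$ and injectivity of $\phi_2$ from that of $\phi$, and then invokes the snake lemma to reduce both (i) and (ii) to the single statement $\ker\phi_1=0$. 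That last vanishing is established by a character-theoretic argument: the top exterior power $\Phi_1$ is equivariant for the center of $L_Q$, and combining this with the numerical constraints of Proposition~\ref{Bel_Ku Thm} (a GIT-based result of Belkale--Kumar) one finds that the weight $\beta$ of $\ker\phi_1$ vanishes on all $x_i$ with $\alpha_i\in\Delta\setminus\Delta(Q)$, forcing $\ker\phi_1=0$.

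Your approach avoids both the snake lemma and Proposition~\ref{Bel_Ku Thm}. You instead observe that each translated tangent space $T^P_{w_k}$ is a sum of root spaces and therefore respects the $L_P$-stable root-space splitting $\gothG/\gothP=(\gothQ/\gothP)\oplus\mathfrak{m}$, with the two summands being precisely the $(Q/P)$- and $(G/Q)$-pieces; the combinatorics $D_{w_k}\cap R_Q^-=D_{v_k}$ and $D_{w_k}\setminus R_Q^-=v_k^{-1}D_{u_k}$ that you use (and which follow from $\ell(w_k)=\ell(u_k)+\ell(v_k)$ and the $W^Q$/$W_Q$ properties) is exactly what justifies exactness of the bottom row in the paper's diagram, but you exploit it more directly: since $L_P$ preserves both summands, the intersection $\ker\phi=\bigcap_k l_kT^P_{w_k}$ literally decomposes as the direct sum of the $\mathfrak{V}$- and $\mathfrak{M}$-intersections, so both vanish at once. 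The residual issues you then handle are genuine and correctly treated: the translate $(g_k)=(l_k\dot v_k^{-1})$ lies in $(L_Q)^s$ rather than $(L_P)^s$, so you pass to a Zariski-open argument to get the generic $L_Q$-tuple; and the dimension conditions $\sum_k\codim X_{u_k}=\dim G/Q$, $\sum_k\codim X_{v_k}=\dim Q/P$ (which the paper recovers from bijectivity of $\phi_1$ and $\phi_2$) you derive directly from the vanishing via the subadditivity inequality $\dim\bigcap A_k\geq\sum\dim A_k-(s-1)\dim V$. What your route buys is independence from Proposition~\ref{Bel_Ku Thm}; what the paper's route buys is that, once the machinery of $\chi^P_w$ and the center of $L_Q$ is set up, the argument transfers verbatim to the branching setting of Theorem~\ref{Thm2branching}, which is precisely how the paper proceeds.
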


As a consequence of Theorem \ref{Thm2}, if $(w_1,\ldots,w_s)$ is $L_P$-movable, we can apply the product formula in Theorem \ref{Thm1} to its associated structure constant since the conditions in \eqref{codim condition} are satisfied.  Moreover, since $(u_1,\ldots,u_s)$ and $(v_1,\ldots,v_s)$ are also Levi-movable, we can again apply the product formula to decompose their associated structure constants. This reduces the problem of computing structure constants associated to any Levi movable $s$-tuple to those coming from the cohomology of flag varieties $G/P$ where $P$ is maximal parabolic subgroup of $G$.

\smallskip

The author has proved a special case of Theorems \ref{Thm1} and \ref{Thm2} for type A flag varieties in \cite[Theorem 3]{Ri08} and type C flag varieties in his thesis \cite{RiThesis}.  The techniques used to prove Theorem \ref{Thm1} are direct generalizations of the those used in \cite{Ri08, RiThesis}.  However, the proof of Theorem \ref{Thm2} is different that the proof for the type A and C cases in previous papers.  The results of Theorems \ref{Thm1} and \ref{Thm2} were also obtained simultaneously by Ressayre in \cite{Re208}.  We remark that Ressayre's proof of these Theorems is different that those presented in this paper.

\smallskip

Unfortunately, the converse to Theorem \ref{Thm2} is false.  Counter examples already exist for two-step flag varieties of type A.  In the following corollary, we give a ``numerical" converse which can be recovered if we assume $(w_1,\ldots,w_s)$ satisfies the numerical conditions given in \cite[Theorem 15 (b)]{BK06}. These conditions are also stated in Proposition \ref{Bel_Ku Thm} of this paper.

\begin{corollary}\label{Cor_converse}Let $(w_1,\ldots,w_s)\in(W^P)^s$ and let $u_k\in W^Q, v_k\in W^P\cap W_Q$ be defined by $w_k=u_kv_k$.  Assume that the following are true:
\begin{enumerate}
\item[(i)]$(u_1,\ldots,u_s)$ is $L_Q$-movable
\item[(ii)] $(v_1,\ldots,v_s)$ is $L_{(L_Q\cap P)}$-movable.
\item[(iii)]$(w_1,\ldots,w_s)$ satisfies the numerical conditions given in Proposition \ref{Bel_Ku Thm}.\end{enumerate}
Then $(w_1,\ldots,w_s)$ is $L_P$-movable.\end{corollary}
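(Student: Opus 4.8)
The plan is to combine Theorem \ref{Thm1}, Theorem \ref{Thm2}, and the Belkale--Kumar characterization of Levi-movability (Proposition \ref{Bel_Ku Thm}) into a purely numerical argument. The key point is that, under the numerical hypothesis (iii), being $L_P$-movable is \emph{equivalent} to having nonzero structure constant $c_w$; this is precisely the content of \cite[Theorem 15 (b)]{BK06}. So it suffices to show $c_w\neq 0$. First I would use hypotheses (i) and (ii): since $(u_1,\ldots,u_s)$ is $L_Q$-movable and $(v_1,\ldots,v_s)$ is $L_{(L_Q\cap P)}$-movable, their associated structure constants $c_u\in H^*(G/Q)$ and $c_v\in H^*(Q/P)$ are both strictly positive (a Levi-movable tuple has nonzero structure constant, by definition of Levi-movability together with the transversality at $eP$ forcing the intersection to be nonempty and reduced). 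In particular the codimension conditions $\sum_k\codim X_{u_k}=\dim G/Q$ and $\sum_k\codim X_{v_k}=\dim Q/P$ hold, and adding these (using $\dim G/P=\dim G/Q+\dim Q/P$ and $\codim X_{w_k}=\codim X_{u_k}+\codim X_{v_k}$, which follows from the decomposition $w_k=u_kv_k$ of Lemma \ref{lemma_wuv} and the additivity of length) gives $\sum_k\codim X_{w_k}=\dim G/P$. Hence the hypotheses \eqref{codim condition} of Theorem \ref{Thm1} are satisfied.

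Now apply Theorem \ref{Thm1}: we get $c_w=c_u\cdot c_v$. Since $c_u>0$ and $c_v>0$, we conclude $c_w>0$, so in particular $c_w\neq 0$. Invoking the numerical hypothesis (iii) and the equivalence in \cite[Theorem 15 (b)]{BK06} (Proposition \ref{Bel_Ku Thm}), the nonvanishing of $c_w$ together with the numerical conditions forces $(w_1,\ldots,w_s)$ to be $L_P$-movable, which completes the proof.

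The step I expect to need the most care is verifying that the codimension/dimension bookkeeping genuinely lets us invoke Theorem \ref{Thm1} — i.e.\ deducing $\sum_k\codim X_{w_k}=\dim G/P$ from the movability of the $u_k$'s and $v_k$'s rather than assuming it. This rests on the additive decomposition of length under $w_k=u_kv_k$ (from Lemma \ref{lemma_wuv}) and on $\dim G/P = \dim G/Q + \dim Q/P$; both are standard but should be stated explicitly. The only other subtlety is making sure the hypothesis (iii) is exactly the system of linear equalities under which ``$L_P$-movable'' and ``$c_w\neq 0$'' coincide — this is quoted verbatim from \cite[Theorem 15 (b)]{BK06} as restated in Proposition \ref{Bel_Ku Thm}, so no further work is needed there.
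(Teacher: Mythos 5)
Your proof is correct and takes essentially the same route as the paper, which disposes of this corollary in one sentence as ``a direct consequence of Theorem~\ref{Thm1} and \cite[Theorem 15]{BK06}''; you correctly fill in the details (the length-additivity bookkeeping $\codim X_{w_k}=\codim X_{u_k}+\codim X_{v_k}$, the positivity $c_u,c_v>0$ from Levi-movability, then $c_w=c_u\cdot c_v>0$ via Theorem~\ref{Thm1}, and finally the converse direction of \cite[Theorem 15(b)]{BK06} under hypothesis (iii)). One small cosmetic point: your opening sentence lists Theorem~\ref{Thm2} among the ingredients, but your actual argument never uses it.
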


This corollary is a direct consequence of Theorem \ref{Thm1} and \cite[Theorem 15]{BK06}.  We remark that Corollary \ref{Cor_converse} can also be established by work outside this paper. In particular, \cite[Theorem 15]{BK06} and \cite[Proposition 11]{PS06} would also imply Corollary \ref{Cor_converse}.

\subsection{Representation theory and tensor product invariants}\label{Section Rep inv}

In this section we state a corollary of Theorems \ref{Thm1} and \ref{Thm2} in regards to representation theory of the group $G$.  Let $X(H)$ denote the character group of the maximal torus $H$ and let $X^+(H)$ denote the set of dominant characters with respect to the Borel subgroup $B$.  For any dominant character $\lambda\in X^+(H)$ of $G$, let $V_{\lambda}$ denote the corresponding irreducible finite dimensional representation of $G$ of highest weight $\lambda$.  For any $s\geq 2$, define $$\Gamma(s,G):=\{(\lambda_1,\ldots, \lambda_s)\in X^+(H)^s\otimes_{\mathds{Z}} \mathds{Q}\ |\ \exists\ N>1,\ (V_{N\lambda_1}\otimes\cdots\otimes V_{N\lambda_s})^G\neq 0\}.$$  The set $\Gamma(s,G)$ forms a convex cone in the vector space $X^+(H)^s\otimes_{\mathds{Z}}\mathds{Q}$ and has been studied in the context of Horn's problem on generalized eigenvalues \cite{BK06,Fu00,Ho62}.  The set $\Gamma(s,G)$ was initially characterized by Klyachko \cite{Kly98} in type A and later in all types by Berenstein and Sjamaar \cite{BS00}.  These characterizations consist of a list of inequalities parameterized by nonzero products of Schubert classes.  In \cite{KTW04}, Knutson, Tao and Woodard determined a minimal set of inequalities for type A.  In \cite{BK06}, Belkale and Kumar showed that for all types, it is enough to consider the set of inequalities corresponding to Levi-movable $s$-tuples with associated structure coefficient equal to 1.  Most recently,  Ressayre \cite{Re07} showed that this set of inequalities is in fact minimal. Let $\Delta$ denote the set of simple roots of $G$ and let $\Delta(P)$ denote the simple roots associated to the parabolic subgroup $P\subseteq G$.  For any $\alpha\in\Delta$, let $\omega_{\alpha^{\vee}}$ denote the corresponding fundamental coweight.

\begin{theorem}\label{Ressayre}(Belkale and Kumar \cite{BK06}, Ressayre \cite{Re07}) If $(w_1,\ldots,w_s)\in W^P$ is $L_P$-movable with associated structure constant $c_w=1$, then the set of $(\lambda_1,\ldots,\lambda_s)\in\Gamma(s,G)$ such that $$\sum_{k=1}^s\omega_{\alpha^{\vee}}(w_k^{-1}\lambda_k)=0\quad \forall\ \alpha\in \Delta\backslash\Delta(P)$$ is a face of $\Gamma(s,G)$ whose codimension is of cardinality $|\Delta\backslash\Delta(P)|$.  Moreover, any face of $\Gamma(s,G)$ which intersects the interior of the dominant chamber $X^+(H)^s\otimes_{\mathds{Z}} \mathds{Q}$ can be described as above, and the list of faces of codimension 1 is irredundant.\end{theorem}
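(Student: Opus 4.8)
The plan is to reduce the statement to the case of a maximal parabolic subgroup, where it is the combined content of \cite[Theorem 15]{BK06} (which produces the face and shows it has codimension one) and \cite{Re07} (which shows that the list of such codimension-one faces is irredundant). Write $\Delta\backslash\Delta(P)=\{\alpha_1,\ldots,\alpha_r\}$, so that $r=|\Delta\backslash\Delta(P)|$, and for each $i$ let $Q_i\supseteq P$ be the maximal parabolic with $\Delta(Q_i)=\Delta\backslash\{\alpha_i\}$. Since $W_P\subseteq W_{Q_i}$ one has $W^{Q_i}\subseteq W^P$, so Lemma \ref{lemma_wuv} applied to the pair $P\subseteq Q_i$ gives, for each $k$, a unique factorization $w_k=u_k^{(i)}v_k^{(i)}$ with $u_k^{(i)}\in W^{Q_i}$ and $v_k^{(i)}\in W^P\cap W_{Q_i}$.

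Fix $i$. Since $(w_1,\ldots,w_s)$ is $L_P$-movable, Theorem \ref{Thm2}(i) shows $(u_1^{(i)},\ldots,u_s^{(i)})$ is $L_{Q_i}$-movable; in particular $\sum_k\codim\ X_{u_k^{(i)}}=\dim G/Q_i$, so \eqref{codim condition} holds for the pair $(P,Q_i)$ and Theorem \ref{Thm1} gives $c_w=c_{u^{(i)}}\cdot c_{v^{(i)}}$. As $c_w=1$ and the factors are non-negative integers, $c_{u^{(i)}}=1$, so by the maximal-parabolic case the functional $\ell_i(\lambda_1,\ldots,\lambda_s):=\sum_k\omega_{\alpha_i^\vee}\big((u_k^{(i)})^{-1}\lambda_k\big)$ does not change sign on $\Gamma(s,G)$ and its zero locus $\mathcal{F}_i$ is a codimension-one face of $\Gamma(s,G)$. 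Next, $v_k^{(i)}\in W_{Q_i}$ is a product of simple reflections $s_\beta$ with $\beta\in\Delta\backslash\{\alpha_i\}$, each of which fixes the fundamental coweight $\omega_{\alpha_i^\vee}$ (because $\langle\beta,\omega_{\alpha_i^\vee}\rangle=0$), hence $v_k^{(i)}\omega_{\alpha_i^\vee}=\omega_{\alpha_i^\vee}$; using the $W$-equivariance of the pairing between characters and cocharacters,
$$\omega_{\alpha_i^\vee}(w_k^{-1}\lambda_k)=\omega_{\alpha_i^\vee}\big((v_k^{(i)})^{-1}(u_k^{(i)})^{-1}\lambda_k\big)=\big(v_k^{(i)}\omega_{\alpha_i^\vee}\big)\big((u_k^{(i)})^{-1}\lambda_k\big)=\omega_{\alpha_i^\vee}\big((u_k^{(i)})^{-1}\lambda_k\big),$$
so $\ell_i(\lambda_1,\ldots,\lambda_s)=\sum_k\omega_{\alpha_i^\vee}(w_k^{-1}\lambda_k)$. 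Therefore the set described in the theorem equals $\mathcal{F}:=\bigcap_{i=1}^r\mathcal{F}_i=\Gamma(s,G)\cap\{\ell_1=\cdots=\ell_r=0\}$, and this is a face of $\Gamma(s,G)$, being an intersection of faces.

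The step I expect to be the main obstacle is showing that the codimension of $\mathcal{F}$ is exactly $r$. The inequality $\codim\ \mathcal{F}\leq r$ is immediate, as $\mathcal{F}$ is cut out inside $\Gamma(s,G)$ by the $r$ equations $\ell_i=0$; the content is the reverse inequality, which amounts to the linear independence of $\ell_1,\ldots,\ell_r$ on the linear span of $\Gamma(s,G)$. Restricting to the first coordinate, $\ell_i$ becomes $\lambda\mapsto\omega_{\alpha_i^\vee}(w_1^{-1}\lambda)$, and these $r$ functionals on $X^+(H)\otimes_{\mathds{Z}}\mathds{Q}$ are linearly independent because $\omega_{\alpha_1^\vee},\ldots,\omega_{\alpha_r^\vee}$ are linearly independent cocharacters and $\lambda\mapsto w_1^{-1}\lambda$ is invertible; hence $\ell_1,\ldots,\ell_r$ are independent on all of $X^+(H)^s\otimes_{\mathds{Z}}\mathds{Q}$. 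The delicate point is that $\Gamma(s,G)$ need not span this space (for instance $\Gamma(2,G)$ does not), so independence must be checked on the span of $\Gamma(s,G)$. For this I would appeal to the geometric invariant theory description of the regular faces: each $\mathcal{F}_i$ is the locus where strict semistability of the line bundle on $(G/B)^s$ attached to $(\lambda_1,\ldots,\lambda_s)$ is witnessed by a one-parameter subgroup of $H$ whose associated parabolic is $Q_i$; these one-parameter subgroups, for $i=1,\ldots,r$, span the $r$-dimensional cocharacter space of the centre of $L_P$, and a local analysis of $\Gamma(s,G)$ near a point of $\mathcal{F}$ lying in the interior of the dominant chamber shows that $\Gamma(s,G)$ is cut out there by precisely the inequalities $\ell_1\geq 0,\ldots,\ell_r\geq 0$, forcing $\codim\ \mathcal{F}=r$. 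Equivalently, this dimension count is the general-$P$ form of \cite[Theorem 15]{BK06} and may simply be cited.

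For the "moreover" assertions I would invoke the classification from \cite{BK06} and \cite{Re07} of the faces of $\Gamma(s,G)$ that meet the interior of the dominant chamber $X^+(H)^s\otimes_{\mathds{Z}}\mathds{Q}$: every such face arises from an $L_P$-movable $s$-tuple with associated structure constant $1$, for a suitable parabolic $P$; by the first part of the argument (read for that $P$ and tuple) such a face is given by the displayed equations, and the irredundancy of the list of codimension-one faces is \cite{Re07}.
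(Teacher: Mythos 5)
The paper does not prove this theorem. It is stated purely as imported background, attributed to Belkale--Kumar \cite{BK06} (for the construction of the face and the codimension count) and to Ressayre \cite{Re07} (for the converse classification and the irredundancy of the codimension-one faces), and it is then \emph{used} without proof to derive the corollary that follows it. So the expected ``proof'' here is a citation, not an argument.

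Your reduction to maximal parabolics is a reasonable idea in spirit, and the verification that $\omega_{\alpha_i^\vee}(w_k^{-1}\lambda_k)=\omega_{\alpha_i^\vee}\bigl((u_k^{(i)})^{-1}\lambda_k\bigr)$ is correct, so you do correctly identify the set in the statement with $\bigcap_{i}\mathcal{F}_i$, hence a face of $\Gamma(s,G)$. But there are two problems. First, the reduction is superfluous: the result of \cite{BK06} that you invoke already treats arbitrary parabolics $P$, including the assertion $\codim\mathcal{F}=|\Delta\backslash\Delta(P)|$, so the ``maximal-parabolic base case plus bootstrap'' is no easier than citing the general statement outright. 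Second, and more substantively, the codimension count is a genuine gap in your argument: linear independence of $\ell_1,\ldots,\ell_r$ on $X^+(H)^s\otimes_{\mathds{Z}}\mathds{Q}$ does not suffice since, as you note, $\Gamma(s,G)$ may span a proper subspace, and the GIT ``local analysis near an interior point of $\mathcal{F}$'' that you gesture at is precisely the hard content of the cited theorem -- invoking it at that point reduces the argument to a citation anyway, which you concede at the end of the paragraph. Given the status of this statement in the paper, the correct move is simply to cite \cite{BK06} and \cite{Re07}, as the author does.
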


Let $F(w_1,\ldots,w_s)\subseteq\Gamma(s,G)$ be the face of $\Gamma(s,G)$ associated to the Levi movable $s$-tuple $(w_1,\ldots,w_s)\in (W^P)^s$ with $c_w=1$.  Applying Theorems \ref{Thm1} and \ref{Thm2} yields the following corollary:

\begin{corollary}Let $(w_1,\ldots,w_s)\in (W^P)^s$ be $L_P$-movable with associated structure constant $c_w=1$ and let $w_k=u_kv_k$ where $u_k\in W^Q$ and $v_k\in W^P\cap W_Q$.  Then $F(w_1,\ldots,w_s)$ is a face of $F(u_1,\ldots,u_s)$. \end{corollary}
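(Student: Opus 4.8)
The plan is to show $F(w_1,\ldots,w_s)\subseteq F(u_1,\ldots,u_s)$ by comparing the linear equations cutting out each face inside $\Gamma(s,G)$, and then to invoke the elementary fact that a face of a convex set which is contained in a second face is itself a face of that second face.

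First I would check that $F(u_1,\ldots,u_s)$ is well defined, i.e. that $(u_1,\ldots,u_s)$ is $L_Q$-movable with structure constant $c_u=1$. By Theorem~\ref{Thm2}(i) it is $L_Q$-movable, so in particular $\sum_k\codim X_{u_k}=\dim G/Q$; together with $\sum_k\codim X_{w_k}=\dim G/P$, which is immediate from $L_P$-movability of $(w_1,\ldots,w_s)$, this is exactly hypothesis \eqref{codim condition}, so Theorem~\ref{Thm1} gives $c_w=c_u\cdot c_v$. Since $(u_1,\ldots,u_s)$ and $(v_1,\ldots,v_s)$ are Levi-movable by Theorem~\ref{Thm2}, both $c_u$ and $c_v$ are nonzero, and being nonnegative integers with product $c_w=1$ they must each equal $1$. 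Hence by Theorem~\ref{Ressayre}, $F(u_1,\ldots,u_s)=\{(\lambda_1,\ldots,\lambda_s)\in\Gamma(s,G):\sum_{k=1}^s\omega_{\alpha^\vee}(u_k^{-1}\lambda_k)=0\ \ \forall\,\alpha\in\Delta\setminus\Delta(Q)\}$.

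Next I would establish the pointwise identity $\omega_{\alpha^\vee}(w_k^{-1}\lambda_k)=\omega_{\alpha^\vee}(u_k^{-1}\lambda_k)$ for every $k$ and every $\alpha\in\Delta\setminus\Delta(Q)$. Writing $\omega_{\alpha^\vee}(w_k^{-1}\lambda_k)=\langle\lambda_k,\,w_k\,\omega_{\alpha^\vee}\rangle$ and using $w_k=u_kv_k$ with $v_k\in W_Q$, it suffices to prove that $\omega_{\alpha^\vee}$ is fixed by $W_Q$. But $W_Q$ is generated by the simple reflections $s_\beta$, $\beta\in\Delta(Q)$, and $s_\beta(\omega_{\alpha^\vee})=\omega_{\alpha^\vee}-\langle\beta,\omega_{\alpha^\vee}\rangle\beta^\vee=\omega_{\alpha^\vee}$, since $\langle\beta,\omega_{\alpha^\vee}\rangle=\delta_{\alpha\beta}=0$ when $\alpha\notin\Delta(Q)$ and $\beta\in\Delta(Q)$. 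As $P\subseteq Q$ gives $\Delta\setminus\Delta(Q)\subseteq\Delta\setminus\Delta(P)$, this identity shows that the defining equations of $F(u_1,\ldots,u_s)$ are precisely the subfamily, indexed by $\alpha\in\Delta\setminus\Delta(Q)$, of the defining equations of $F(w_1,\ldots,w_s)$; in particular $F(w_1,\ldots,w_s)\subseteq F(u_1,\ldots,u_s)$.

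Finally, since $F(w_1,\ldots,w_s)$ and $F(u_1,\ldots,u_s)$ are both faces of the cone $\Gamma(s,G)$ by Theorem~\ref{Ressayre}, with the former contained in the latter, and since any face of a convex set contained in a second face is automatically a face of that second face (if $x,y$ lie in the larger face and some proper convex combination $tx+(1-t)y$ lies in the smaller face, then $x,y$ already lie in the smaller face because it is a face of the whole cone), we conclude that $F(w_1,\ldots,w_s)$ is a face of $F(u_1,\ldots,u_s)$. I do not expect a genuine obstacle here: the substance of the corollary is carried entirely by Theorems~\ref{Thm1} and~\ref{Thm2}, which guarantee that $F(u_1,\ldots,u_s)$ is an honest face with $c_u=1$; what remains is the short root-system computation above and a formal property of faces of convex sets.
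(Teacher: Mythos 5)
Your argument is correct and follows essentially the same route as the paper: apply Theorems \ref{Thm1} and \ref{Thm2} to get that $(u_1,\ldots,u_s)$ is $L_Q$-movable with $c_u=1$, observe that $v_k\in W_Q$ fixes $\omega_{\alpha^\vee}$ for $\alpha\in\Delta\setminus\Delta(Q)$ so the defining equations of $F(u_1,\ldots,u_s)$ are among those of $F(w_1,\ldots,w_s)$, and conclude the containment. The only difference is that you spell out the (elementary, and correct) fact that a face of $\Gamma(s,G)$ contained in another face is a face of that face, which the paper leaves implicit.
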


\begin{proof}By Theorems \ref{Thm1} and \ref{Thm2}, we have that $(u_1,\ldots, u_s)$ is $L_Q$-movable and that $c_w=c_u\cdot c_v=1$, where $c_w,c_u,c_v$ are the structure constants associated to $(w_k)_{k=1}^s, (u_k)_{k=1}^s, (v_k)_{k=1}^s$ respectively.  Hence $c_u=1$ and by Theorem \ref{Ressayre}, $F(u_1,\ldots,u_s)$ is a face of $\Gamma(s,G)$ of codimension $|\Delta\backslash\Delta(Q)|$.  It suffices to show that if $(\lambda_1,\ldots,\lambda_s)\in F(w_1,\ldots,w_s)$, then $(\lambda_1,\ldots,\lambda_s)\in F(u_1,\ldots,u_s)$. Let $\alpha\in \Delta\backslash\Delta(Q)\subseteq\Delta\backslash\Delta(P)$.  Then for any $w\in W^P$ and rational dominant weight $\lambda$, we have $$\omega_{\alpha^{\vee}}(w^{-1}\lambda)=uv\omega_{\alpha^{\vee}}(\lambda)=u\omega_{\alpha^{\vee}}(\lambda)=\omega_{\alpha^{\vee}}(u^{-1}\lambda)$$ since $v\in W_Q$ acts trivially on any $\omega_{\alpha^{\vee}}$ where $\alpha\in\Delta\backslash\Delta(Q)$.  This proves the corollary.\end{proof}

\subsection{Generalizations to branching Schubert calculus}

In this section, we give generalizations of Theorems \ref{Thm1} and \ref{Thm2}.  We remark that the generalization of Theorem \ref{Thm1} was also independently obtained by Ressayre in \cite{Re208}.  Let $\tilG$ be any connected semisimple subgroup of $G$ and fix maximal tori and Borel subgroups $\tilH\subseteq\tilB\subseteq\tilG$ and $H\subseteq B\subseteq G$ such that $\tilH=H\cap\tilG$ and $\tilB=B\cap\tilG$.  For any parabolic subgroup $P\subseteq G$ containing $B$, we define parabolic subgroup $\tilP:=P\cap\tilG$ of $\tilG$.  Consider the $\tilG$-equivariant embedding of flag varieties $$\phi_z:\tilG/\tilP\hookrightarrow G/P$$ defined by $\phi(g\tilP):=gP$.  The problem concerning ``branching Schubert calculus" is to compute the pullback $$\phi^*([X_w])=\sum_{\tilde w\in \tilW^P}c_w^{\tilde w}[X_{\tilde w}]$$ in terms of the Schubert basis in $H^*(\tilG/\tilP)$.  If $\dim X_w=\dim G/P-\dim \tilG/\tilP$, then $\phi^*([X_w])=c_w[pt]$ for some $c_w\in\mathds{Z}_{\geq 0}$.  Consider the diagonal embedding $\phi:\tilG/\tilP\hookrightarrow (\tilG/\tilP)^s$ and let $[X_{w_1}\times\cdots\times X_{w_s}]$ be a Schubert class in $H^*((\tilG/\tilP)^s)$.  We have that $$\phi^*([X_{w_1}\times\cdots\times X_{w_s}])=\prod_{k=1}^s[X_{w_k}].$$  Hence the problem of branching Schubert calculus is a generalization of usual Schubert calculus.

\smallskip

Let $Q$ be a parabolic subgroup which contains $P$ and define $\tilQ:=Q\cap\tilG$ to be the corresponding parabolic subgroup of $\tilG$.  The embedding $\phi$ induces the maps $$\phi_1:\tilG/\tilQ\hookrightarrow G/Q\quad \mbox{and}\quad \phi_2:\tilQ/\tilP\hookrightarrow Q/P$$ given by $\phi_1(g\tilQ):=gQ,$ and $\phi_2:=\phi|_{\tilQ/\tilP}.$  The following is an analogue of Theorem \ref{Thm1}:

\begin{theorem}\label{Thm1branching}
Let $w=uv\in W^P$ where $u\in W^Q$ and $v\in W^P\cap W_Q$.  Assume that \begin{equation}\label{codim condition2}\dim X_w=\dim G/P-\dim \tilG/\tilP\quad \mbox{and}\quad \dim X_u=\dim G/Q-\dim \tilG/\tilQ.\end{equation}

If $c_w,c_u,c_v\in\mathds{Z}_{\geq 0}$ are defined by:
$$\phi^*([X_w])=c_w[pt],\quad \phi^*_1([X_u])=c_u[pt],\quad \phi^*_2([X_v])=c_v[pt]$$ in $H^*(\tilG/\tilP), H^*(\tilG/\tilQ), H^*(\tilQ/\tilP)$ respectively, then $c_w=c_u\cdot c_v$. \end{theorem}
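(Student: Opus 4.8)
The plan is to carry over the geometric proof of Theorem~\ref{Thm1} to the case of a single Schubert class pulled back along $\phi$: translate $\bar X_w$, count the points of its intersection with $\phi(\tilG/\tilP)$, and split that count along the projection $\pi\colon G/P\to G/Q$. Since $G$ acts transitively on $G/P$, Kleiman transversality produces a dense open set of $g\in G$ for which $g\bar X_w$ meets $\phi(\tilG/\tilP)$ transversally and inside the smooth locus of $g\bar X_w$; together with $[g\bar X_w]=[X_w]$ and the hypothesis $\dim X_w=\dim G/P-\dim\tilG/\tilP$, this makes the intersection a reduced finite set of cardinality $\int_{\tilG/\tilP}\phi^*[X_w]=c_w$. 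The identical argument applied to $\phi_1$ and $g\bar X_u$ (using $\dim X_u=\dim G/Q-\dim\tilG/\tilQ$) yields $c_u$ reduced points, and applied to $\phi_2$ and $q\bar X_v$ for generic $q\in Q$ yields $c_v$ reduced points; here one uses the transitive action of $Q$ on $Q/P$, and the fact that $w=uv$ with $\ell(w)=\ell(u)+\ell(v)$ forces $\dim X_v=\dim X_w-\dim X_u=\dim Q/P-\dim\tilQ/\tilP$, so that $c_v$ really is the generic count. Fix once and for all a $g$ in the dense open locus where all of this holds, and put $I_w:=\phi^{-1}(g\bar X_w)$ and $I_u:=\phi_1^{-1}(g\bar X_u)$.

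The relevant maps sit in the commuting square $\pi\circ\phi=\phi_1\circ\tilde\pi$, where $\tilde\pi\colon\tilG/\tilP\to\tilG/\tilQ$ is the bundle with fibre $\tilQ/\tilP$. Since $wQ=uQ$ we have $\pi(\bar X_w)=\bar X_u$, so $\phi(x)\in g\bar X_w$ implies $\phi_1(\tilde\pi(x))=\pi(\phi(x))\in g\bar X_u$, that is, $\tilde\pi(I_w)\subseteq I_u$. Hence
\[
c_w=|I_w|=\sum_{y\in I_u}\big|\tilde\pi^{-1}(y)\cap I_w\big|,
\]
the points of $I_u$ outside $\tilde\pi(I_w)$ contributing zero, and everything comes down to showing each fibre term equals $c_v$.

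Fix $y=\tilde g\tilQ\in I_u$, so $\phi_1(y)=\tilde gQ\in g\bar X_u$. For generic $g$ the finitely many points $\phi_1(I_u)$ all land in the dense open subset of $g\bar X_u$ over which $\pi|_{g\bar X_w}$ is a Zariski-locally-trivial fibre bundle whose fibres are $Q$-translates of $\bar X_v$ inside the corresponding fibres of $\pi$; this is the geometric meaning of the factorization $w=uv$ already used for Theorem~\ref{Thm1}, and avoidance of the complementary closed locus is one more dimension count via Kleiman. The bundle fibre $\tilde\pi^{-1}(y)=\tilde g\tilQ/\tilP$ is sent by $\phi$ into $\pi^{-1}(\tilde gQ)=\tilde gQ/P$, and translating both by $\tilde g^{-1}$ turns this restriction of $\phi$ into $\phi_2$ itself while turning $g\bar X_w\cap\tilde gQ/P$ into a $Q$-translate $q_y\bar X_v\subseteq Q/P$; here one writes $g^{-1}\tilde g\in BuQ$ and uses the $B$-stability of $\bar X_w$ to absorb the left Borel factor. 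Thus $\tilde\pi^{-1}(y)\cap I_w=\phi_2^{-1}(q_y\bar X_v)$, and if $q_y$ is a generic element of $Q$ this set has exactly $c_v$ points, giving $c_w=\sum_{y\in I_u}c_v=|I_u|\cdot c_v=c_u\cdot c_v$.

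The whole proof therefore reduces to the statement that for generic $g\in G$ the element $q_y\in Q$ is generic simultaneously for \emph{every} one of the $c_u$ points $y\in I_u$, and I expect this to be the real obstacle, handled by Belkale's tangent-space methods from \cite{Be06} (the type~A and type~C precursors being \cite{Ri08,RiThesis}). Concretely, one forms the incidence variety $\mathcal I=\{(g,\tilde g\tilQ)\;:\;g\in G,\ \tilde g\in\tilG,\ \tilde gQ\in g\bar X_u\}\subseteq G\times\tilG/\tilQ$, which is irreducible (its projection to $\tilG/\tilQ$ has irreducible equidimensional fibres) and maps to $G$ with generic fibre $I_u$ of size $c_u$, and one studies the rational map from $\mathcal I$ to the space of $Q$-translates of $\bar X_v$ in $Q/P$ sending $(g,\tilde g\tilQ)$ to $q_y\bar X_v$. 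The hard part is to show this map is dominant, which I would do by computing its differential at one conveniently chosen point and checking surjectivity, the intuition being that translating $\bar X_w$ by $G$ already moves the fibre translate through a dense family of $Q$-translates of $\bar X_v$. Granting dominance, the preimage of the (proper, closed) non-transversality locus is proper and closed in $\mathcal I$, its image under the generically finite projection to $G$ is contained in a proper closed subset of $G$, and for $g$ outside it no $y\in I_u$ has $q_y$ non-generic; this completes the argument, and, applied to the diagonal $\tilG=\Delta G\subseteq G^s$, it recovers Theorem~\ref{Thm1}.
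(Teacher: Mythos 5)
Your geometric picture — project along $\pi\colon\tilG/\tilP\to\tilG/\tilQ$, express $c_w$ as a fiber-by-fiber sum over $I_u$, and control the bad locus by irreducibility of an incidence variety together with a dimension count through the generically finite map down to the base — matches the structure of the paper's proof. The dimension bookkeeping ($\dim X_v=\dim Q/P-\dim\tilQ/\tilP$ from $\ell(w)=\ell(u)+\ell(v)$), the reduction via $q_y\in Q$, and the use of Lemma~\ref{Lemma_intersection} to identify the fiber intersection with $q_y X_v\cap\phi_2(\tilQ/\tilP)$ are all as in the paper.

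The genuine gap is exactly the one you flag: you have not shown that ``$q_y$ is generic simultaneously for every $y\in I_u$'' for generic $g$, and the proposed fix — ``compute the differential of the rational map $\mathcal I\dashrightarrow\{\text{$Q$-translates of }\bar X_v\}$ at a conveniently chosen point and check surjectivity'' — is not carried out. The paper fills this gap by a construction, not a differential computation. It observes (Lemma~\ref{Lemma Ysmooth}, via \cite[Lemma 6.1]{Re04}) that the incidence variety $Y$ is isomorphic, $\tilG$-equivariantly, to the bundle $\tilG\times_{\tilQ}(Qu^{-1}B/B)$, and then Lemma~\ref{lemma_QutoQ} produces a well-defined $Q$-equivariant map $Qu^{-1}B/B\to Q/B$, $\overline{qu^{-1}}\mapsto\bar q$ (the only content being $Q\cap u^{-1}Bu\subseteq B$, a root-system check). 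This map is visibly surjective, so the induced $\tilde\xi\colon Y\to\tilG\times_{\tilQ}(Q/B)$ is surjective, and one simply pulls back a nonempty $Q$-stable Kleiman-generic open $O\subseteq Q/B$ (for the action of $Q$ on $Q/P$, intersecting with the fixed subvariety $\phi_2(\tilQ/\tilP)$) to get the good open locus $Y^\circ=\tilde\xi^{-1}(\tilG\times_{\tilQ}O)$ directly. No tangent-space surjectivity argument is needed; what you call ``the real obstacle'' reduces to Lemma~\ref{lemma_QutoQ}. Two smaller points: your $\mathcal I$ uses $\bar X_u$ rather than the open cell $X_u$, which costs you smoothness (the paper's $Y$ is a smooth bundle, which is what makes the dimension count clean), and your ``rational map to the space of $Q$-translates'' is not globally defined on $\mathcal I$ — the paper sidesteps this by mapping into the twisted product $\tilG\times_{\tilQ}(Q/B)$ rather than a fixed copy of $Q/B$.
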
 The techniques used to prove Theorem \ref{Thm1branching} are the same as those used to prove Theorem \ref{Thm1}, so we only provide a brief overview in Section \ref{Branching}.

As in Section \ref{section Lmov}, we give a special set of $w\in W^P$ which satisfy the assumptions in Theorem \ref{Thm1branching} by generalizing the notion of Levi-movability.
\begin{definition}We say $w\in W^P$ is $(L_P,\phi)$-movable if for generic $l\in L_P$ the following induced map on tangent spaces is an isomorphism:$$\phi_*:T_{e\tilP}(\tilG/\tilP)\rightarrow \frac{T_{eP}(G/P)}{T_{eP}(lw^{-1}X_w)}.$$\end{definition}
If $\phi$ is the diagonal embedding, then $w=(w_1,\ldots,w_s)$ is $(L_P,\phi)$-movable if and only if $w$ is $L_P$-movable.  We now give an analogue of Theorem \ref{Thm2}.  Let $\tilde\gothH$ denote the Lie algebra of $\tilH$ and let $\Delta_{\tilG}\subset\tilde\gothH^*$ denote the simple roots of $\tilG$.  Let $\Delta_{\tilQ}\subseteq\Delta_{\tilG}$ denote the set of simple roots corresponding to the parabolic subgroup $\tilQ\subseteq\tilG$.  Let $\mathfrak{Z}$ denote the Lie algebra of the center of $L_Q$.

\begin{theorem}\label{Thm2branching}Assume there exists a vector $\tau\in\tilde\gothH\cap \mathfrak{Z}$ such that $\alpha(\tau)\geq 0$ for any $\alpha\in\Delta_{\tilG}$ with equality if and only if $\alpha\in\Delta_{\tilQ}$.

Let $w=uv\in W^P$ where $u\in W^Q$ and $v\in W^P\cap W_Q$.  If $w$ is $(L_P,\phi)$-movable, then the following are true:
\begin{enumerate}\item[(i)] $u$ is $(L_Q,\phi_1)$-movable \item[(ii)] $v$ is $(L_{(L_Q\cap P)},\phi_2)$-movable.\end{enumerate}\end{theorem}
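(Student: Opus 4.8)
The plan is to reduce the statement to a computation involving tangent spaces at the base points, exploiting the decomposition of $T_{eP}(G/P)$ induced by the sequence \eqref{flag sequence} and the one-parameter subgroup $\tau$. First I would recall that $G/P \twoheadrightarrow G/Q$ has fiber $Q/P$ over $eQ$, so that at the level of tangent spaces at the base point there is a short exact sequence
\begin{equation}\label{tangent_ses}
0 \to T_{eP}(Q/P) \to T_{eP}(G/P) \to T_{eQ}(G/Q) \to 0,
\end{equation}
and the analogous sequence for $\tilG/\tilP$, $\tilG/\tilQ$, $\tilQ/\tilP$. The embedding $\phi$ respects these sequences via $\phi_1$ and $\phi_2$. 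The hypothesis on $\tau$ is what pins down the splitting: since $\tau\in\mathfrak{Z}$ and $\alpha(\tau)>0$ exactly for $\alpha\in\Delta_{\tilG}\setminus\Delta_{\tilQ}$, the adjoint action of $\tau$ acts with positive weights on $T_{eP}(G/P)/T_{eP}(Q/P)$ and with weight zero on $T_{eP}(Q/P)$ (after intersecting with $\tilG$), so $\tau$ simultaneously splits \eqref{tangent_ses} and its tilde-analogue compatibly with $\phi_*$.

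Next I would unwind the $(L_P,\phi)$-movability hypothesis: for generic $l\in L_P$ the map $\phi_*\colon T_{e\tilP}(\tilG/\tilP)\to T_{eP}(G/P)/T_{eP}(lw^{-1}X_w)$ is an isomorphism. The key point is to understand how the subspace $T_{eP}(lw^{-1}X_w)$ interacts with the filtration \eqref{tangent_ses}. Using $w=uv$ with $u\in W^Q$, $v\in W^P\cap W_Q$, I expect that the image of $T_{eP}(lw^{-1}X_w)$ in $T_{eQ}(G/Q)$ is $T_{eQ}(l'u^{-1}X_u)$ for an appropriate $l'\in L_Q$ (obtained from $l$, using $L_Q\supseteq L_P$ and the projection $P\to L_Q\cap P$ up to unipotent correction), while the intersection with $T_{eP}(Q/P)$ is $T_{eP}(l''v^{-1}X_v)$ for $l''\in L_{(L_Q\cap P)}$. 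This is precisely the tangent-space shadow of the geometric fibration statement used to prove Theorem \ref{Thm1} (the intersection $\bigcap g_kX_{w_k}$ projecting onto $\bigcap g_kX_{u_k}$ with fibers governed by the $v_k$'s), now applied infinitesimally at a single point. Granting this, a diagram chase in the commutative ladder relating \eqref{tangent_ses} to its tilde-version, with $\tau$ providing compatible splittings, shows that $\phi_*$ being an isomorphism on the total quotient forces $\phi_{1*}$ to be an isomorphism on $T_{e\tilQ}(\tilG/\tilQ)\to T_{eQ}(G/Q)/T_{eQ}(l'u^{-1}X_u)$ (this is exactly $(L_Q,\phi_1)$-movability of $u$) and $\phi_{2*}$ to be an isomorphism on the kernel side (which is $(L_{(L_Q\cap P)},\phi_2)$-movability of $v$). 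The dimension bookkeeping is automatic from \eqref{tangent_ses}: the total dimension splits as the sum of the two.

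The main obstacle I anticipate is justifying, for \emph{generic} $l\in L_P$, that the tangent space $T_{eP}(lw^{-1}X_w)$ really does decompose along \eqref{tangent_ses} into the expected pieces coming from generic $l'\in L_Q$ and generic $l''\in L_{(L_Q\cap P)}$. Two genericity issues must be handled carefully: first, that the induced elements $l'$ and $l''$ sweep out dense subsets of $L_Q$ and $L_{(L_Q\cap P)}$ respectively as $l$ varies over a dense subset of $L_P$ (this should follow from surjectivity of $L_P\to L_Q\cap P \to L_{(L_Q\cap P)}$ and of the relevant map into $L_Q$, modulo unipotent radicals which act trivially on the relevant tangent data because $\tau$ kills them); and second, that the weight-grading argument via $\tau$ is valid at the tangent space of the translated Schubert variety, not just at $T_{eP}(G/P)$ — here one uses that $w^{-1}X_w$ is $B^-$-stable near $eP$ hence its tangent space is a sum of root spaces, and $l$ (being in $L_P$, which normalizes the grading by $\tau\in\mathfrak{Z}\cap\tilde\gothH$) moves this to a subspace still compatible with the $\tau$-grading after restricting to $\tilG$. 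Once these genericity statements are in place, the rest is the diagram chase, which is routine. I would organize the write-up by first proving the tangent-space decomposition lemma, then deducing (i) and (ii) in parallel from the split ladder.
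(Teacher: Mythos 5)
Your approach is sound and lands on the same crux as the paper's proof: the reduction via a commutative ladder of tangent-space short exact sequences (the paper's diagram \eqref{comm_tangent_branching}), followed by a $\tau$-positivity argument to rule out a kernel. The difference is in the last step. The paper takes determinants: it observes that the isomorphism $\phi$ and the surjection $\phi_1$ induce nonzero $C$-equivariant maps $\Phi, \Phi_1$ on top exterior powers (where $C = \tilH\cap Z(L_Q)$), compares the resulting characters to get $\beta(\tau)=0$ for all $\tau\in\Lie(C)$ with $\beta$ the $C$-character of $\ker\phi_1$, and then uses $\tau_0$ to force $\ker\phi_1=0$ since every root occurring in $\tilde T^Q$ is strictly negative on $\tau_0$. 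You instead chase an element of $\ker\phi_1$: lift it along the $\tau_0$-weight splitting of $\tilde T^P$ to a strictly negative weight vector, push through $\phi$, land (by exactness) in $T^{P,Q}/lT^{P,Q}_v$, which has $\tau_0$-weight zero since $\tau_0\in\mathfrak{Z}$, and conclude by injectivity of $\phi$. These are the same idea in two packagings; yours bypasses the determinant/character bookkeeping, the paper's hews closer to Belkale--Kumar's original style. Two small cautions on your write-up: the phrase ``$\tau$ simultaneously splits \eqref{tangent_ses} and its tilde-analogue'' is a bit loose, because $\tau_0\in\mathfrak{Z}$ need not separate $R_Q$ from $R\setminus R_Q$ inside $G$ (only inside $\tilG$), so the \emph{bottom} row is not a priori graded by $\tau_0$-weight in the way the top row is; what you actually use, and all you need, is that $\tilde T^P$ splits by $\tau_0$-weight and that $T^{P,Q}/lT^{P,Q}_v$ sits in weight zero. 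Likewise ``the dimension bookkeeping is automatic from \eqref{tangent_ses}'' understates things: equality of total dimensions does not by itself force the pieces to match, and the $\tau_0$-argument is precisely what breaks the tie. Finally, you correctly flag the exactness of the bottom row (your ``tangent-space decomposition lemma'', i.e. that $l T^P_w \cap T^{P,Q} = l T^{P,Q}_v$ and the projection of $lT^P_w$ to $T^Q$ is $lv^{-1}T^Q_u$) as the technical input that must be justified; this is the content implicitly carried by the paper's diagram \eqref{comm_tangent_branching} and its diagonal precursor \eqref{comm_tangent}, and filling it in would complete your argument.
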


The existence of $\tau\in\tilde\gothH\cap \mathfrak{Z}$ in Theorem \ref{Thm2branching} is a restriction on the choice of $Q\subseteq G$.  In the case of the diagonal embedding, the vector $\tau$ exists if and only if the parabolic subgroup $Q\subseteq G=\tilG^s$ is of the form $Q=\tilQ^s$ for some parabolic subgroup $\tilQ\subset\tilG$.

\smallskip

Theorem \ref{Thm2branching} implies that if $w\in W^P$ is $(L_P,\phi)$-movable, then $w$ satisfies the conditions in \eqref{codim condition2} and hence we can decompose the associated structure constant $c_w$.  As with Theorem \ref{Thm1branching}, the proof of Theorem \ref{Thm2branching} follows the same outline as the proof in the diagonal embedding case.

\subsection*{Acknowledgments} I would like to thank Prakash Belkale for suggesting I investigate these product formulas.  I would also like to thank Shrawan Kumar for ideas and comments on Theorem \ref{Thm2}, pointing out that Theorems \ref{Thm1} and \ref{Thm2} could be generalized to the branching Schubert calculus setting and his overall generous input.  Finally, I would like to thank the referee for his/her valuable comments and suggestions.

\section{Preliminaries}

Let $G$ be a connected, simply connected, semisimple complex algebraic group.  Fix a Borel subgroup $B$ and a maximal torus $H\subseteq B$.  Let $W:=N_G(H)/H$ denote the Weyl group of $G$ where $N_G(H)$ is the normalizer of $H$ in $G$.  Let $P\subseteq G$ be a standard parabolic subgroup ($P$ contains $B$) and let $L_P$ denote the Levi subgroup of $P$ containing $H$.  Denote the Lie algebras of $G,H,B,P,L_P$ by the corresponding frankfurt letters $\gothG,\gothH,\gothB,\gothP,\gothL_P$.

\smallskip

Let $R\subseteq \gothH^*$ be the set of roots and let $R^{\pm}\subseteq R$ denote the set of positive roots (negative roots).  Let $R_{P}$ denote the set of roots corresponding to $\gothL_P$ and let $R_P^{\pm}$ denote the set of positive roots (negative roots) with respect to the Borel subgroup $B_P:=B\cap L_P$ of $L_P$.

\smallskip

Let $W^P$ be the set of minimal length representatives of the coset space $W/W_P$ where $W_P$ is the Weyl group of $P$ (or $L_P$).  For any $w\in W^P$, define the Schubert cell $$X_w:=BwP/P\subseteq G/P.$$  We denote the cohomology class of the closure $\bar X_w$ by $[X_w]\in H^*(G/P)$.  We begin with some basic lemmas on the Weyl group $W$.

\begin{lemma}\label{lemma_wuv} The map $\eta:W^Q\times(W^P\cap W_Q)\rightarrow W^P$ given by $(u,v)\mapsto uv$ is well defined and a bijection.\end{lemma}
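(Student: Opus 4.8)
The plan is to exhibit the map $\eta$ as a bijection by constructing an explicit two-sided inverse and checking that $uv$ lands in $W^P$. Recall that for a parabolic subgroup $P \subseteq Q$ one has the standard length-additivity property for minimal coset representatives: every $w \in W$ factors uniquely as $w = w^Q \cdot w_Q$ with $w^Q \in W^Q$ and $w_Q \in W_Q$, and $\ell(w) = \ell(w^Q) + \ell(w_Q)$; moreover there is an analogous factorization inside $W_Q$ relative to its parabolic subgroup $W_P = W_Q \cap W_P$. First I would recall these facts (they are standard, e.g. Bourbaki or Björner--Brenti) and set up notation. The subgroup $W^P \cap W_Q$ is precisely the set of minimal-length representatives of $W_Q / W_P$ inside $W_Q$, which I would verify from the definitions: an element of $W_Q$ is of minimal length in its $W_P$-coset inside $W_Q$ iff it is of minimal length in its $W_P$-coset inside $W$, because $\ell(x s_\alpha) $ versus $\ell(x)$ for a simple reflection $s_\alpha$ of $P$ is an intrinsic condition not depending on the ambient group.

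Next I would show $\eta$ is well defined, i.e. that $uv \in W^P$ whenever $u \in W^Q$ and $v \in W^P \cap W_Q$. The cleanest route is via the length/reduced-word characterization: $uv \in W^P$ iff $\ell(uv s_\alpha) > \ell(uv)$ for all simple roots $\alpha \in \Delta(P)$. Since $v \in W_Q$ and $s_\alpha \in W_P \subseteq W_Q$, we have $vs_\alpha \in W_Q$, so $\ell(uvs_\alpha) = \ell(u) + \ell(vs_\alpha)$ by length-additivity of the $W^Q \cdot W_Q$ factorization; likewise $\ell(uv) = \ell(u) + \ell(v)$. Thus $\ell(uvs_\alpha) > \ell(uv)$ is equivalent to $\ell(vs_\alpha) > \ell(v)$, which holds because $v \in W^P$. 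This shows $\eta$ is well defined, and along the way gives the length formula $\ell(uv) = \ell(u) + \ell(v)$, which will be convenient later.

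For injectivity and surjectivity I would invoke the uniqueness of the $W^Q \cdot W_Q$ factorization directly. Given $w \in W^P$, write $w = u \cdot t$ with $u \in W^Q$, $t \in W_Q$ uniquely; then further factor $t = v \cdot p$ with $v \in W^P \cap W_Q$ the minimal representative of $tW_P$ inside $W_Q$ and $p \in W_P$. Since $w \in W^P$ and $u t = w$ with $\ell(w) = \ell(u) + \ell(t)$, I claim $p = e$: indeed $w = uv \cdot p = (uv) p$, and one checks $uv \in W^P$ by the previous paragraph (as $v \in W^P \cap W_Q$), so by uniqueness of the $W^P \cdot W_P$ factorization of $w$ we get $p = e$ and $w = uv = \eta(u,v)$. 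This gives surjectivity. For injectivity, if $\eta(u,v) = \eta(u',v')$ then $u v = u' v'$ with $u, u' \in W^Q$ and $v, v' \in W_Q$; uniqueness of the $W^Q \cdot W_Q$ factorization forces $u = u'$ and $v = v'$. I expect the only mild subtlety — the step requiring the most care — to be the verification that $W^P \cap W_Q$ really is the set of minimal-length coset representatives for $W_Q/W_P$ within $W_Q$, i.e. that "minimal in $W$" and "minimal in $W_Q$" coincide for these cosets; everything else is a formal consequence of the standard parabolic factorization and its uniqueness.
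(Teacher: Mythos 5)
Your proof is correct and follows essentially the same route as the paper: reduce to checking $uv \in W^P$ via a length-additivity argument (you use the simple-reflection criterion $\ell(uvs_\alpha)>\ell(uv)$, the paper uses the equivalent full form $\ell(uvv')=\ell(uv)+\ell(v')$ for $v'\in W_P$), and get bijectivity from uniqueness of the parabolic factorizations $W=\bigsqcup_{u\in W^Q}uW_Q$ and $W_Q = (W^P\cap W_Q)\cdot W_P$. The subtlety you flagged — that minimality in $W_Q$ and in $W$ coincide for $W_P$-cosets inside $W_Q$ — is exactly the point the paper handles implicitly by writing $W/W_P = \bigsqcup_{u\in W^Q}uW_Q/W_P$, and your justification of it is correct.
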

\begin{proof}Since $W=\bigsqcup_{u\in W^Q} uW_Q$, we have that $W/W_P=\bigsqcup_{u\in W^Q} uW_Q/W_P$.  It suffices to show that if $v\in W^P\cap W_Q$, then $uv\in W^P$.  Let $\ell:W\rightarrow\mathds{Z}_{\geq 0}$ denote the length function on $W$.  For any $v'\in W_P$ we have that $$\ell(uvv')=\ell(u)+\ell(vv')=\ell(u)+\ell(v)+\ell(v')=\ell(uv)+\ell(v')$$ since $u\in W^Q$, $vv'\in W_Q$, $v\in W^P$ and $v'\in W_P$.  Hence $uv\in W^P$. \end{proof}

Lemma \ref{lemma_wuv} shows that for any $w\in W^P$, there is a unique $u\in W^Q$ and $v\in W^P\cap W_Q$ such that $w=uv$.  We will assume this relationship between $w,u,v$ given any $w\in W^P$.  If these groups elements are indexed $w_k\in W^P$, then we write $w_k=u_kv_k$ accordingly.

\smallskip

Note that the flag variety $Q/P\simeq L_Q/(L_Q\cap P)$ where $L_Q$ is the Levi subgroup of $Q$.  Under this identification, the Schubert cell $X_v\simeq B_Qv(L_Q \cap P)/(L_Q\cap P).$

\begin{lemma}\label{Lemma_intersection}For any $w=uv\in W^P$, we have that $u^{-1}X_w\cap Q/P=X_v.$\end{lemma}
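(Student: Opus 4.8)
The plan is to unwind the definitions and reduce everything to a statement about Bruhat cells inside the Levi subgroup $L_Q$, using the standard identification $Q/P\simeq L_Q/(L_Q\cap P)$ recalled just above the lemma. First I would observe that the embedding $Q/P\hookrightarrow G/P$ sends $Q/P$ onto the subvariety $QP/P=\{qP:q\in Q\}$, and that under the Bruhat decomposition $G/P=\bigsqcup_{w'\in W^P}Bw'P/P$ the intersection $Q/P\cap (Bw'P/P)$ is nonempty precisely when $w'\in W^P\cap W_Q$, in which case it equals the cell $X_{w'}$ viewed inside $Q/P\simeq L_Q/(L_Q\cap P)$ via $X_{w'}\simeq B_Q w'(L_Q\cap P)/(L_Q\cap P)$. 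This is essentially the compatibility of the Bruhat decompositions of $G$ and of the Levi $L_Q$, together with the Levi decomposition $Q=L_Q\ltimes U_Q$ of the unipotent radical.

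Next I would translate the left-hand side $u^{-1}X_w\cap Q/P$. By definition $X_w=BwP/P=BuvP/P$, so $u^{-1}X_w=u^{-1}BuvP/P$. The key algebraic input is that for $u\in W^Q$ one has $u^{-1}Bu\cap Q \supseteq B_Q$ in a suitable sense; more precisely, writing $B=H\,U$ and splitting the unipotent radical $U$ according to which roots are sent by $u^{-1}$ into $R^+$ versus $R^-$, and further according to membership in $R_Q$, the condition $u\in W^Q$ (equivalently $u^{-1}(R_Q^+)\subseteq R^+$) guarantees that the part of $u^{-1}Bu$ that can contribute to a point of $Q/P$ is exactly $B_Q$. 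Intersecting $u^{-1}BuvP/P$ with $QP/P$ therefore collapses to $B_Q v(L_Q\cap P)/(L_Q\cap P)=X_v$ under the identification, which is the claim. I would carry this out by showing the two inclusions separately: $X_v\subseteq u^{-1}X_w\cap Q/P$ is the easy direction since $v\in B_Q\subseteq u^{-1}Bu$-translates land in the right cell and $v\in W_Q$ keeps us inside $Q/P$; the reverse inclusion is where the length additivity $\ell(uv)=\ell(u)+\ell(v)$ from Lemma \ref{lemma_wuv}, hence the fact that $uv\in W^P$ has $v$ as its ``$W_Q$-component,'' does the work.

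The main obstacle I anticipate is making the reduction ``the part of $u^{-1}Bu$ meeting $Q$ is $B_Q$'' fully rigorous without drowning in root-subgroup bookkeeping: one must be careful that conjugating $B$ by $u^{-1}$ and then intersecting with $Q$ really does not introduce extra unipotent directions, and that the projection $Q\to L_Q$ modulo $U_Q$ interacts correctly with the coset space $Q/P$. I would handle this by invoking the Bruhat decomposition of $Q$ itself — $Q=\bigsqcup_{v'\in W_Q} B_Q v' B_Q \cdot U_Q$ type statements, or more cleanly $Q/P \simeq L_Q/(L_Q\cap P)=\bigsqcup_{v'\in W^P\cap W_Q} B_Q v'(L_Q\cap P)/(L_Q\cap P)$ — and then checking that a point $qP\in Q/P$ lies in $u^{-1}BuP/P$-translate $u^{-1}BuvP/P$ iff its $L_Q$-Bruhat cell is indexed by $v$, using $\ell(uv')=\ell(u)+\ell(v')$ to match indices. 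Once the cell-index matching is in place, equality of the two cells $X_v = u^{-1}X_w\cap Q/P$ is immediate.
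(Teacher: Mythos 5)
Your strategy coincides with the paper's for the first two steps: rewrite $u^{-1}X_w\cap Q/P$ as $(u^{-1}Bu\cap Q)vP/P$, and obtain the inclusion $X_v\subseteq u^{-1}X_w\cap Q/P$ from the containment $B_Q\subseteq u^{-1}Bu\cap Q$ (the paper cites Kumar's Exercise~1.3.E for this). The difference lies in how you close the argument. You propose a combinatorial finish: place the point $qP$ in a Bruhat cell $X_{v'}$ of $L_Q/(L_Q\cap P)$ and use the length additivity $\ell(uv')=\ell(u)+\ell(v')$, hence the Tits-axiom identity $BuB_Qv'P=Buv'P$, to force $uv'=uv$ and thus $v'=v$. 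The paper instead argues geometrically: $u^{-1}X_w\cap Q/P$ is a single orbit of $u^{-1}Bu\cap Q$, a connected solvable group whose image in $L_Q$ is a connected solvable overgroup of the Borel $B_Q$ and hence equals $B_Q$; being therefore a single $B_Q$-orbit that contains $X_v$, it must equal $X_v$. Both finishes work; yours is more elementary and self-contained but requires the explicit root/length bookkeeping you worry about, whereas the paper's is shorter but leans on the (cited elsewhere in the paper, from Digne--Michel) connectedness of $u^{-1}Bu\cap Q$ and on the root-theoretic fact that $u(R_Q^+)\subseteq R^+$.

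One small correction: the condition $u\in W^Q$ (minimal-length representative of $uW_Q$) is equivalent to $u(R_Q^+)\subseteq R^+$, equivalently $R_Q^+\subseteq u^{-1}(R^+)$, \emph{not} to $u^{-1}(R_Q^+)\subseteq R^+$ as you wrote. This is the form in which it is used to conclude that the image of $u^{-1}Bu\cap Q$ in $L_Q$ has root system exactly $R_Q^+$, which is the root-theoretic input both proofs need. With that slip fixed, your plan is sound.
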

\begin{proof}  Let $X_w'$ denote the subset of $L_Q/(L_Q\cap P)$ identified with $u^{-1}X_w\cap Q/P$ under the isomorphism $Q/P\simeq L_Q/(L_Q\cap P)$.  Since $v\in W_Q$, we have that $$u^{-1}X_w\cap Q/P=(u^{-1}BuvP\cap Q)P/P=(u^{-1}Bu\cap Q)vP/P.$$   By \cite[Excerise 1.3.E]{Ku02}, the group $B_Q\subseteq u^{-1}Bu\cap Q$ and hence $B_Qv(L_Q \cap P)/(L_Q\cap P)\subseteq X_w'$.  Since the $B_Q$-orbits of $L_Q/(L_Q\cap P)$ are in bijection with $W^P\cap W_Q$, the set $X_w'$ cannot contain more than a single $B_Q$-orbit. This proves the lemma\end{proof}

\section{Structure coefficients and transversality}

In this section we prove Theorem \ref{Thm1}.  Assume we have $(w_1,\ldots,w_s)\in (W^P)^s$ which satisfy the conditions \eqref{codim condition} and let $w_k=u_kv_k$ with respect to Lemma \ref{lemma_wuv}.  We begin by considering the following $G$-variety.  Define $$Y=Y(u_1,\ldots, u_s):=\{(\bar g\, ;\, \bar g_1,\ldots, \bar g_s)\in G/Q\times (G/B)^s\ |\ \bar g\in \bigcap_{k=1}^sg_kX_{u_k}\}$$ where the action $G$ on $Y$ is the diagonal action.  We now prove that $Y$ is smooth and irreducible.  Define $$\tilde Y:=G\times_Q (Qu_1^{-1}B/B\times\cdots\times Qu_s^{-1}B/B).$$  Note that if $\bar g\in g_kX_{u_k}$, then by \cite[Lemma 1]{BK06}, we have that $g^{-1}g_k=q_ku_k^{-1}$ for some $q_k\in Q.$  Since translated Schubert varieties of the form $qu_k^{-1}X_{u_k}$ are precisely those that contain the identity, we can think of $\tilde Y$ as the parameter set of all intersections $\bigcap_{k=1}^sg_kX_{u_k}$ which contain the identity up to translation paired with a point in $G.$

\begin{lemma}\label{Lemma Ysmooth}The $G$-equivariant map $\xi:\tilde Y\rightarrow Y$ given by
\begin{equation}\label{Ysmooth}\xi((g\, ;\, \overline{q_1u_1^{-1}}, \ldots, \overline{q_su_s^{-1}}))=(\bar g\, ;\, \overline{gq_1u^{-1}_1},\ldots, \overline{gq_su_s^{-1}}).\end{equation} is well defined and an isomorphism.  Moreover, $Y$ is smooth and irreducible.\end{lemma}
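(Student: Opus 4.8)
The plan is to show that $\xi$ is a well-defined $G$-equivariant morphism with a two-sided inverse, and then to deduce smoothness and irreducibility of $Y$ from the corresponding properties of the homogeneous-bundle construction $\tilde Y$.

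First I would check well-definedness. An element of $\tilde Y$ is an equivalence class $(g\,;\,\overline{q_1u_1^{-1}},\ldots,\overline{q_su_s^{-1}})$ under the $Q$-action $q\cdot(g\,;\,\overline{q_1u_1^{-1}},\ldots) = (gq^{-1}\,;\,\overline{qq_1u_1^{-1}},\ldots)$. Applying formula \eqref{Ysmooth} to both representatives, one sees the output $(\overline{gq^{-1}}\,;\,\overline{gq^{-1}qq_1u_1^{-1}},\ldots) = (\bar g\,;\,\overline{gq_1u_1^{-1}},\ldots)$ is unchanged, so $\xi$ descends to $\tilde Y$. Next I must verify that the image actually lands in $Y$, i.e.\ that $\bar g\in\bigcap_k g'_kX_{u_k}$ where $g'_k=gq_ku_k^{-1}$. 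Since $\overline{q_ku_k^{-1}}\in Qu_k^{-1}B/B$, translating the inclusion $\overline{u_k^{-1}}\in X_{u_k}$ (note $\overline{u_k^{-1}}\in Bu_k^{-1}B/B \subseteq \bar{X}_{u_k}$ is false in general, so I should instead argue directly): the point is that $(g'_k)^{-1}g = u_kq_k^{-1}\in Qu_kQ\cdot$, and by the Schubert-variety characterization cited in the paper (translated Schubert varieties $qu_k^{-1}X_{u_k}$ are exactly those containing $eQ$), we get $\bar g\in g'_kX_{u_k}$ for each $k$; I will need to state this carefully using \cite[Lemma 1]{BK06}. Equivariance is immediate since left-multiplication by $h\in G$ sends $g\mapsto hg$ on both sides.

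Then I would exhibit the inverse. Given $(\bar g\,;\,\bar g_1,\ldots,\bar g_s)\in Y$, choose a lift $g\in G$ of $\bar g$; by \cite[Lemma 1]{BK06} the condition $\bar g\in g_kX_{u_k}$ forces $g^{-1}g_k\in Qu_k^{-1}B$, so $g^{-1}g_k=q_ku_k^{-1}b_k$ for some $q_k\in Q$, giving $\overline{g^{-1}g_k}\in Qu_k^{-1}B/B$. Set $\psi(\bar g\,;\,\bar g_1,\ldots,\bar g_s) := (g\,;\,\overline{g^{-1}g_1},\ldots,\overline{g^{-1}g_s})\in\tilde Y$. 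Changing the lift $g\mapsto gq$ with $q\in Q$ changes the class by exactly the $Q$-action defining $\tilde Y$, so $\psi$ is well defined, and one checks $\xi\circ\psi=\mathrm{id}_Y$, $\psi\circ\xi=\mathrm{id}_{\tilde Y}$ directly from the formulas. Hence $\xi$ is an isomorphism of varieties.

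Finally, smoothness and irreducibility: the fiber $Qu_1^{-1}B/B\times\cdots\times Qu_s^{-1}B/B$ is a product of $Q$-orbits in copies of $G/B$, hence a product of smooth irreducible locally closed subvarieties, so it is smooth and irreducible; $\tilde Y = G\times_Q(\cdots)$ is then the total space of a Zariski-locally-trivial fiber bundle over the smooth irreducible base $G/Q$ with smooth irreducible fiber, hence smooth and irreducible, and therefore so is $Y\cong\tilde Y$. The main obstacle I anticipate is not any single hard step but bookkeeping: getting the $Q$-quotient conventions on $\tilde Y$ consistent with the formula \eqref{Ysmooth} and invoking \cite[Lemma 1]{BK06} in exactly the right form to identify which translates of $X_{u_k}$ pass through a given point — once the conventions are pinned down, well-definedness, the inverse, and the bundle argument are all routine.
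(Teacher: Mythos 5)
Your proposal is correct, and it takes a genuinely more self-contained route than the paper does. The paper's own proof is a one-liner: it declares that if $\xi$ is an isomorphism then smoothness and irreducibility of $Y$ follow from those of $\tilde Y$, and then simply cites Ressayre \cite[Lemma 6.1]{Re04} for the fact that $\xi$ is an isomorphism. You instead unpack the claim from scratch: you verify descent of $\xi$ through the $Q$-equivalence defining the twisted product $G\times_Q(\cdot)$, check that the image lies in $Y$ via the ``translates of $qu_k^{-1}X_{u_k}$ contain $eQ$'' characterization from \cite[Lemma 1]{BK06}, construct the two-sided inverse $\psi(\bar g;\bar g_1,\ldots,\bar g_s)=(g;\overline{g^{-1}g_1},\ldots,\overline{g^{-1}g_s})$ and show it is independent of the lift $g$, and then deduce smoothness and irreducibility from the fiber-bundle description $\tilde Y\to G/Q$ with fiber a product of connected $Q$-orbits in $G/B$. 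All of this is correct. What each approach buys: the paper's citation is economical and lets the reader who trusts \cite{Re04} move on; your version makes the lemma verifiable without chasing the reference, at the cost of some bookkeeping.

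Two small cleanups worth making in a final write-up. First, in the paragraph showing the image lands in $Y$, your displayed computation starts from $(g'_k)^{-1}g=u_kq_k^{-1}$, but the cleaner thing to write down is the reverse translate $g^{-1}g'_k=q_ku_k^{-1}$, since then $g^{-1}g'_kX_{u_k}=q_ku_k^{-1}X_{u_k}$ is literally one of the translates containing $eQ$, and $\bar g\in g'_kX_{u_k}$ follows immediately; the dangling ``$\in Qu_kQ\cdot$'' should be removed. Second, when exhibiting $\psi$ as a morphism (not just a set map), you should note that the lift $g$ of $\bar g$ can be chosen as a local section of the Zariski-locally-trivial $Q$-bundle $G\to G/Q$; independence of the choice is what makes the local definitions glue. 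With those tidied, your argument stands on its own and agrees with the paper's conclusion.
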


\begin{proof}If $\xi$ is an isomorphism, then the irreducibility and smoothness of $Y$ follows from the irreducibility of smoothness of $\tilde Y$.  The fact that $\xi$ is an isomorphism is a consequence of \cite[Lemma 6.1]{Re04}.\end{proof}

\begin{lemma}\label{lemma_QutoQ}For any $u\in W^Q$, the map $Qu^{-1}B/B\rightarrow Q/B$ given by $\overline{qu^{-1}}\mapsto\overline q$ is well defined and $Q$-equivariant. \end{lemma}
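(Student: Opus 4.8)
The plan is to show that the assignment $\overline{qu^{-1}}\mapsto \overline{q}$ descends to a well-defined $Q$-equivariant morphism $Qu^{-1}B/B\to Q/B$, and the only real content is the well-definedness. First I would observe that the domain $Qu^{-1}B/B$ is exactly the $Q$-orbit of the point $u^{-1}B\in G/B$ under left multiplication, so every point has the form $\overline{qu^{-1}}$ for some $q\in Q$; the map sends this to $\overline{q}\in Q/B$. To see this is well defined I must check that if $q_1u^{-1}B=q_2u^{-1}B$ in $G/B$, then $q_1B=q_2B$ in $Q/B$. Equivalently, writing $q:=q_2^{-1}q_1\in Q$, I need: if $qu^{-1}B=u^{-1}B$, i.e. $u^{-1}qu\in B$ (equivalently $q\in uBu^{-1}$), then $q\in B$. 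So the heart of the matter is the claim $Q\cap uBu^{-1}\subseteq B$, i.e. the stabilizer in $Q$ of the point $u^{-1}B$ equals $Q\cap B = B_Q$.

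The key step is this intersection computation, and I would prove it using that $u\in W^Q$ is the minimal length coset representative. Since $u\in W^Q$, one has the standard fact (see e.g. \cite[Excerise 1.3.E]{Ku02}, which was already invoked in the proof of Lemma \ref{Lemma_intersection}) that $B_Q\subseteq u^{-1}Bu\cap Q$, equivalently $uB_Qu^{-1}\subseteq B\cap uQu^{-1}$. What I need is the reverse-flavored statement about roots: $Q\cap uBu^{-1}$ is a closed subgroup containing $H$, hence is generated by $H$ together with the root subgroups $U_\beta$ for $\beta$ in the set $(R_Q)\cap u(R^+)$. Because $u\in W^Q$ is minimal in $uW_Q$, the inversion set of $u$ contains no roots of $R_Q$; concretely $u(R_Q^-)\subseteq R^-$ is false in general, but the correct statement is $u^{-1}(R^-_P\text{-type})$... more cleanly: $w\in W^Q$ iff $u(\alpha)\in R^+$ for every simple root $\alpha$ of $L_Q$, which forces $u(R_Q^+)\subseteq R^+$, hence $(R_Q)\cap u(R^+)\supseteq R_Q^+$ and in fact $(R_Q^-)\cap u(R^+)=\emptyset$. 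Therefore $Q\cap uBu^{-1}$ is generated by $H$ and $\{U_\beta : \beta\in R_Q^+\}\cup\{U_\beta:\beta\in R^+\setminus R_Q, \beta\in u(R^+)\}$, all of which lie in $B$; so $Q\cap uBu^{-1}\subseteq B$, as desired.

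I anticipate the main obstacle is bookkeeping with these root-subgroup descriptions and making sure the minimal-length condition $u\in W^Q$ is used in the right direction — it is easy to conflate $W^Q$ (minimal reps of $W/W_Q$) with the condition needed, and to get confused between $u$ and $u^{-1}$ and between $B$ and $B^-$. To keep this clean I would instead phrase the argument entirely on the level of sets: $Qu^{-1}B/B$ is a $Q$-orbit in $G/B$; by the Bruhat-type decomposition of $Q$-orbits (equivalently $B_Q$-orbits, since $Q=B_Q W_Q B_Q$) on $G/B$, this orbit is isomorphic as a $Q$-variety to $Q/\mathrm{Stab}_Q(u^{-1}B)$, and the stabilizer is a parabolic-type subgroup of $Q$ whose identity component's unipotent radical together with $H$ one identifies as above; minimality of $u$ in $uW_Q$ gives precisely $\mathrm{Stab}_Q(u^{-1}B)=B_Q\subseteq B$. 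Once well-definedness is established, $Q$-equivariance is immediate from the formula ($q'\cdot\overline{qu^{-1}}=\overline{(q'q)u^{-1}}\mapsto \overline{q'q}=q'\cdot\overline{q}$), and that the map is a morphism follows since it is induced by the quotient of the orbit map; I would only sketch these last points rather than belabor them.
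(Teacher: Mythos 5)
Your overall strategy --- reduce well-definedness to showing that $\mathrm{Stab}_Q(u^{-1}B)$ lies in $B$, then verify this at the level of roots using $u\in W^Q$ --- is the paper's strategy, but a conjugation error turns the key containment into a false statement, and the root argument you give to support it is itself invalid.

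The error: $qu^{-1}B=u^{-1}B$ is equivalent to $uqu^{-1}\in B$, i.e.\ to $q\in u^{-1}Bu$ --- not, as you write, to $u^{-1}qu\in B$ or $q\in uBu^{-1}$. So the containment to prove is $Q\cap u^{-1}Bu\subseteq B$, not $Q\cap uBu^{-1}\subseteq B$. The latter is false in general: take $G=SL_3$, $\Delta(Q)=\{\alpha_1\}$, $u=s_1s_2\in W^Q$; then $u(\alpha_1+\alpha_2)=-\alpha_1$, so $-\alpha_1\in R_Q^-\cap u(R^+)$ and the root space $\gothG_{-\alpha_1}$ lies in $\gothQ\cap u\gothB u^{-1}$ but not in $\gothB$. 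The same example refutes your claims that $(R_Q)\cap u(R^+)\supseteq R_Q^+$ and $(R_Q^-)\cap u(R^+)=\emptyset$; neither follows from $u(R_Q^+)\subseteq R^+$. What $u\in W^Q$ actually gives is $u(R_Q^-)\subseteq R^-$ --- which, contrary to your aside that this ``is false in general,'' is true, and is precisely the fact needed: it is equivalent to $R_Q\cap u^{-1}(R^+)=R_Q^+\subseteq R^+$, which (combined with the trivial $R^+\cap u^{-1}R^+\subseteq R^+$) shows that all roots of $\gothQ\cap u^{-1}\gothB$ lie in $R^+$. Once the conjugation is corrected in this way your outline collapses to the paper's proof. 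One further small gap: passing from the Lie-algebra inclusion to the group inclusion $Q\cap u^{-1}Bu\subseteq B$ uses that this intersection is connected (the paper cites \cite[Proposition 2.1]{DM91} for this); your phrase ``generated by $H$ together with root subgroups'' assumes that connectedness without justification.
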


\begin{proof}Let $q_1,q_2\in Q$ such that $q_1u^{-1}B=q_2u^{-1}B$.  Then $uq_2^{-1}q_1u^{-1}\in B$.  It suffices to show that $q_2^{-1}q_1\in B$.  In other words, that $Q\cap u^{-1}Bu\subseteq B.$  By \cite[Proposition 2.1]{DM91}, the intersection $Q\cap u^{-1}Bu$ is connected and hence, it is enough to show that $\gothQ\cap u^{-1}\gothB\subseteq\gothB$.  We look at the set of roots $R_Q\cap u^{-1}R^+$ corresponding to $\gothQ\cap u^{-1}\gothB$.  Since $u\in W^Q$, we have that $uR_Q^+\subseteq R^+$ and $uR_Q^-\subseteq R^-$.  Thus $$R_Q\cap u^{-1}R^+=u^{-1}(uR_Q\cap R^+)=u^{-1}(uR^+_Q)\subseteq R^+.$$  This proves the lemma.\end{proof}

Assume we have $(\bar g\, ;\, \bar g_1,\ldots, \bar g_s)\in G/P\times (G/B)^s$ such that $\bar g\in\bigcap_{k=1}^sg_kX_{w_k}$.  It is easy to see that $(\bar gQ\, ;\, \bar g_1,\ldots, \bar g_s)\in Y$.  By \cite[Lemma 1]{BK06}, since $eP\in g^{-1}g_kX_{w_k}$, we have $g^{-1}g_kX_{w_k}=p_kv_k^{-1}u_k^{-1}X_{w_k}$ from some $p_k\in P$.  Set $q_k=p_kv_k^{-1}\in Q.$  By Lemma \ref{Lemma_intersection}, $$g^{-1}g_kX_{w_k}\cap Q/P=q_k(u_k^{-1}X_{w_k}\cap Q/P)=q_kX_{v_k}.$$  We consider the points of $Y$ that satisfy the following property.

\begin{definition}We say $(\bar g\, ;\, \bar g_1,\ldots, \bar g_s)\in Y$ has property $P1$ if:
\begin{enumerate}
\item $\displaystyle\bigcap_{k=1}^s (g^{-1}g_kX_{w_k}\cap Q/P)$ is transverse at every point in the intersection in $Q/P$
\item For any $(q_1\ldots,q_s)\in Q^s$ such that, $g^{-1}g_kX_{u_k}=q_ku_k^{-1}X_{u_k}\subseteq G/Q$ for all $k$, the intersection $$\displaystyle\bigcap_{k=1}^s q_kX_{v_k}=\bigcap_{k=1}^s q_k\bar X_{v_k}\subseteq Q/P.$$\end{enumerate}\end{definition}

\begin{proposition}\label{Prop P1 open}The set of points in $Y$ with property $P1$ contains a nonempty $G$-stable open subset.\end{proposition}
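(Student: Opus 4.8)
The plan is to pull property $P1$ back through the fibre-bundle presentation $\tilde Y\cong G\times_Q\mathcal F$ of Lemma \ref{Lemma Ysmooth}, where $\mathcal F:=\prod_{k=1}^{s}Qu_k^{-1}B/B$, and to reduce it to Kleiman's transversality theorem for the Schubert varieties $\bar X_{v_1},\dots,\bar X_{v_s}$ inside the flag variety $Q/P\cong L_Q/(L_Q\cap P)$. First observe that both clauses defining $P1$ refer only to the products $g^{-1}g_k$, so $P1$ is invariant under the diagonal $G$-action; using the $G$-equivariant isomorphism $\xi\colon\tilde Y\to Y$ it therefore suffices to exhibit a nonempty $Q$-stable open $\mathcal F^{\circ}\subseteq\mathcal F$ such that $\xi(e\,;\,f)\in Y$ has property $P1$ for every $f\in\mathcal F^{\circ}$ --- for then $\xi(G\times_Q\mathcal F^{\circ})$ is a nonempty $G$-stable open subset of $Y$, every point of which, being a $G$-translate of some $\xi(e\,;\,f)$, has $P1$.

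Next I would translate $P1$ into a condition on $\mathcal F$. Write $f=(\overline{q_1u_1^{-1}},\dots,\overline{q_su_s^{-1}})$, so that $\xi(e\,;\,f)=(\bar g\,;\,\bar g_1,\dots,\bar g_s)$ with $g^{-1}g_k=q_ku_k^{-1}$. Since the unipotent radical $U_Q$ is contained in $P$, it acts trivially on $Q/P$, so the $Q$-action on $Q/P$ factors through the Levi quotient $L_Q$; let $\ell_k\in L_Q$ be the image of $q_k$. By Lemma \ref{Lemma_intersection}, $g^{-1}g_k\bar X_{w_k}\cap Q/P=q_k\bar X_{v_k}=\ell_k\bar X_{v_k}$ (and similarly $g^{-1}g_kX_{w_k}\cap Q/P=\ell_kX_{v_k}$), and $\ell_k\bar X_{v_k}$ depends only on $\overline{q_k}\in Q/B$ because $B\subseteq\mathrm{Stab}_Q(\bar X_{v_k})$. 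A short Bruhat-order argument shows that the full $s$-tuple $(q_k\bar X_{v_k})_k$ is independent of the choice of the $q_k$ allowed in clause $(2)$: every left descent of $u_k$ is a left descent of $w_k=u_kv_k$, so $\mathrm{Stab}_G(\bar X_{u_k})\subseteq\mathrm{Stab}_G(\bar X_{w_k})$, which together with the inclusion $Q\cap u_k^{-1}Bu_k\subseteq B$ (the main step in the proof of Lemma \ref{lemma_QutoQ}) pins $q_k$ down modulo $\mathrm{Stab}_Q(\bar X_{v_k})$. Hence clauses $(1)$ and $(2)$ of $P1$ together amount to the single statement that the $s$-tuple $(\ell_1\bar X_{v_1},\dots,\ell_s\bar X_{v_s})$ in $Q/P$ meets transversally, of pure dimension $0$, with every intersection point lying in the open cells $\ell_kX_{v_k}$. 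This is a condition on the image of $f$ under the $Q$-equivariant morphism $\rho\colon\mathcal F\to(Q/B)^{s}$, $(\overline{q_ku_k^{-1}})_k\mapsto(\overline{q_k})_k$, of Lemma \ref{lemma_QutoQ}, and it is invariant under the diagonal $L_Q$-action on $(Q/B)^{s}\cong(L_Q/B_Q)^{s}$.

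To construct $\mathcal F^{\circ}$ I would invoke transversality. Under \eqref{codim condition} we have $\sum_{k}\codim X_{v_k}=\dim Q/P$ (as remarked after Theorem \ref{Thm1}). Kleiman's theorem, applied in characteristic $0$ to the transitive action of the connected group $L_Q$ on $Q/P$ and to $\bar X_{v_1},\dots,\bar X_{v_s}$, provides a nonempty open $U\subseteq L_Q^{s}$ such that for $(\ell_1,\dots,\ell_s)\in U$ the intersection $\bigcap_k\ell_k\bar X_{v_k}$ is transverse of pure dimension $0$; and because each boundary $\bar X_{v_k}\setminus X_{v_k}$ has codimension in $Q/P$ strictly larger than $\codim X_{v_k}$, the same dimension count shows (after shrinking $U$) that $\bigcap_k\ell_k\bar X_{v_k}=\bigcap_k\ell_kX_{v_k}$, i.e.\ the intersection points avoid all the boundaries. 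Since being \emph{good} in this sense depends only on the class in $(L_Q/B_Q)^{s}$ and is $L_Q$-diagonally invariant, the $L_Q$-saturation of the image of $U$ in $(Q/B)^{s}$ is a nonempty $Q$-stable open set $\mathcal U$ of good tuples. Finally, by Lemma \ref{lemma_QutoQ} each factor $Qu_k^{-1}B/B\to Q/B$ is a surjective morphism of $Q$-homogeneous spaces, hence a fibre bundle, so $\rho$ is open and surjective; therefore $\mathcal F^{\circ}:=\rho^{-1}(\mathcal U)$ is a nonempty $Q$-stable open subset of $\mathcal F$, and $\xi(G\times_Q\mathcal F^{\circ})$ is the required subset of $Y$.

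I expect the hardest step to be the translation in the second paragraph --- verifying that the two rather delicate conditions of $P1$, and in particular the universal quantifier over the $q_k$'s in clause $(2)$, are recorded faithfully by the single Kleiman-genericity statement for $(\ell_k\bar X_{v_k})_k$ in $Q/P$ --- along with the care needed when invoking Kleiman's theorem to ensure the intersection points stay inside the open Schubert cells. The remaining bundle-theoretic bookkeeping (openness, $Q$-stability, and passage through $G\times_Q(\,\cdot\,)$ and $\xi$) is routine.
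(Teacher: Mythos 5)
Your proposal is correct and takes essentially the same route as the paper: Kleiman's transversality applied to $\bar X_{v_1},\dots,\bar X_{v_s}$ in $Q/P$ gives a $Q$-stable open set in $(Q/B)^s$, which is then pulled back to $Y$ through the composite of $\xi$ (Lemma~\ref{Lemma Ysmooth}) and the map of Lemma~\ref{lemma_QutoQ}. The only difference is that you explicitly verify the final step --- that every point of the resulting open set has property $P1$, including the universal quantifier over the $q_k$ in clause~(2) via a stabilizer/descent argument --- whereas the paper dismisses this with ``clearly''; your extra care is warranted and correct.
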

\begin{proof}By Kleiman's transversality \cite{Kl74}, there exists a nonempty open set $O\subseteq (Q/B)^s$ such that for any $(q_1,\ldots, q_s)\in O$ the following is satisfied:
\begin{enumerate}
\item $\bigcap_{k=1}^s q_kX_{v_k}\subseteq Q/P$ is transverse at every point in the intersection.
\item $\bigcap_{k=1}^s q_kX_{v_k}=\bigcap_{k=1}^s q_k\bar X_{v_k}$.\end{enumerate}

Moreover, we can choose $O$ to be stable under the diagonal action of $Q$ on $(Q/B)^s$ by replacing $O$ with $\bigcup_{q\in Q}qO$.  Consider the map $$\tilde\xi:Y\rightarrow G\times_Q (Q/B)^s$$ defined by $\tilde\xi:=\zeta\circ\xi^{-1}$ where $$\zeta((g\, ;\, \overline{q_1u_1^{-1}}, \ldots, \overline{q_su_s^{-1}})):=(g\, ;\, \overline{q_1}, \ldots, \overline{q_s}).$$  By Lemma \ref{lemma_QutoQ}, the map $\tilde\xi$ is well defined and $G$-equivariant.  Clearly any $(g\, ;\, g_1,\ldots,g_s)\in\tilde\xi^{-1}(G\times_Q O) $ satisfies property $P1$.\end{proof}

\subsection{Proof of Theorem \ref{Thm1}}

Assume that $c_u\neq 0$.  We first show there exists $(\bar g_1, \ldots \bar g_s)\in (G/B)^s$ which satisfies the following three conditions:
\begin{enumerate}
\item[(i)] $\bigcap_{k=1}^s g_kX_{w_k}$ is transverse at every point of the intersection in $G/P$ and $$\bigcap_{k=1}^s g_kX_{w_k}=\bigcap_{k=1}^s g_k\bar X_{w_k}.$$
\item[(ii)] $\bigcap_{k=1}^s g_kX_{u_k}$ is transverse at every point of the intersection in $G/Q$ and $$\bigcap_{k=1}^s g_kX_{u_k}=\bigcap_{k=1}^s g_k\bar X_{u_k}.$$
\item[(iii)] For every $x\in \bigcap_{k=1}^s g_kX_{u_k}$, we have that $(x\, ;\,\bar g_1,\ldots,\bar g_s)\in Y$ has property $P1$.
\end{enumerate}

By Kleiman's tranversality \cite{Kl74}, there exists an open subset $O_1\subseteq (G/B)^s$ such that every point in $O_1$ satisfies conditions (i) and (ii).  By Proposition \ref{Prop P1 open}, there exists a nonempty open subset $Y^{\circ}\subseteq Y$ such that every point in $Y^{\circ}$ has property $P1$.  Consider the projection of $Y$ onto its second factor $$\sigma:Y\rightarrow (G/B)^s.$$  Since $c_u\neq 0$, the morphism $\sigma$ is dominant.  Moreover, the fibers of $\sigma$ are generically finite and hence $\dim Y=\dim (G/B)^s$.  Since $Y$ is irreducible we have that $$\dim\overline{\sigma(Y\backslash Y^{\circ})}\leq\dim Y\backslash Y^{\circ}<\dim Y=\dim (G/B)^s.$$  Define the nonempty open set $O_2:=(G/B)^s\backslash(\overline{\sigma(Y\backslash Y^{\circ})})$.  Any $(\bar g_1, \ldots, \bar g_s)\in O_1\cap O_2$ satisfies conditions (i)-(iii).  Assume that $(\bar g_1, \ldots, \bar g_s)\in O_1\cap O_2\subseteq (G/B)^s$.  Conditions (i) and (ii) imply that $$\left|\bigcap_{k=1}^s g_kX_{w_k}\right|=c_w\ \mbox{and}\ \left|\bigcap_{k=1}^s g_kX_{u_k}\right|=c_u.$$
Consider the $G$-equivariant projection $\pi:G/P\twoheadrightarrow G/Q$.  If $\bar g\in \bigcap_{k=1}^s g_kX_{u_k}$, then condition (iii) implies that $(\bar g\, ;\,\bar g_1,\ldots,\bar g_s)\in Y$ has property $P1$.  By Lemma \ref{Lemma_intersection}, we have \begin{equation}\label{equation_cap}\left|\bigcap_{k=1}^s g_kX_{w_k}\cap\pi^{-1}(\bar g)\right|=\left|\bigcap_{k=1}^s q_ku^{-1}X_{w_k}\cap Q/P\right|=\left|\bigcap_{k=1}^s q_kX_{v_k}\right|=c_v\end{equation} where we choose $q_k\in Q$ such that $g^{-1}g_kX_{w_k}=q_ku_k^{-1}X_{w_k}$.

\smallskip

If $c_w=0$, then $\bigcap_{k=1}^s g_kX_{w_k}=\emptyset.$   Equation \eqref{equation_cap} implies that $c_v=0$ and hence $c_w=c_u\cdot c_v.$

\smallskip

If $c_w\neq 0$, then we have a surjection $$\pi\left(\bigcap_{k=1}^s g_kX_{w_k}\right)=\bigcap_{k=1}^s g_kX_{u_k}.$$  Equation \eqref{equation_cap} again implies that $c_w=c_u\cdot c_v$.

\smallskip

Finally, if $c_u=0$, then $c_w=0$ since for generic $(\bar g_1\ldots,\bar g_s)\in (G/B)^s$, we have $$\pi\left(\bigcap_{k=1}^s g_kX_{w_k}\right)\subseteq\bigcap_{k=1}^s g_kX_{u_k}=\emptyset.$$  Hence we still have $c_w=c_u\cdot c_v$. \hfill$\Box$

\section{Applications to Levi-movability}

One application of Theorem \ref{Thm1} is to compute structure coefficients corresponding to Levi-movable $s$-tuples in $(W^P)^s$.  We begin with some preliminaries on Lie theory.  Let $\Delta = \{\alpha_1,\alpha_2,\ldots,\alpha_{n}\}\subset R^+$ be the set of simple roots of $G$ where $n$ is the rank of $G$.  Note that the set $\Delta$ forms a basis for $\gothH^*$ and let $\{x_1,x_2,\ldots,x_n\}\subseteq \gothH$ be the dual basis to $\Delta$ such that $$\alpha_i(x_j)=\delta_{i,j}.$$  Let $\Delta(P)\subset\Delta$ denote the simple roots associated to $P$  (the simple roots that generate $R_P^+$).  For any parabolic subgroup $P$ and $w\in W^P$, define the character $$\chi^P_w:=\rho-2\rho^P+w^{-1}\rho$$ where $\rho$ is the half sum of all the roots in $R^+$ and $\rho^P$ is the half sum of roots in $R^+_P$.  The following proposition is proved in \cite{BK06} using geometric invariant theory:

\begin{proposition}\label{Bel_Ku Thm} (Belkale and Kumar \cite[Theorem 15]{BK06}) If $(w_1,\ldots, w_s)$ is $L_P$-movable, then for every $\alpha_i\in\Delta\backslash\Delta(P)$, we have $$\big((\sum_{k=1}^s \chi^P_{w^k})-\chi^P_1\big)(x_i)=0.$$\end{proposition}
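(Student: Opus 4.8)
The plan is to reduce the statement to a weight computation for a central one‑parameter subgroup of $L_P$. Write $V:=\gothG/\gothP$ for the tangent space $\Te(G/P)$, an $L_P$‑module which as an $H$‑module is $V=\bigoplus_{\beta\in R^+\setminus R_P^+}\gothG_{-\beta}$. For $w\in W^P$ set $T_w:=\Te(w^{-1}X_w)\subseteq V$; since $w^{-1}X_w=(w^{-1}Bw)P/P$ is a translate of a cell, it is smooth at $eP$, and differentiating the orbit map of $w^{-1}Bw$ through $eP$ gives, by a short computation, $T_w=\bigoplus_{\beta}\gothG_{-\beta}$ summed over $\beta\in R^+\setminus R_P^+$ with $w\beta\in R^-$; in particular $\dim(V/T_w)=\codim X_w$. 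Because $L_P$ fixes $eP$, for $(l_1,\dots,l_s)\in L_P^s$ the variety $l_kw_k^{-1}X_{w_k}$ is smooth at $eP$ with tangent space $l_kT_{w_k}$, and since $\sum_k\codim X_{w_k}=\dim G/P$ (part of the definition of $L_P$‑movability), transversality of $\bigcap_k l_kw_k^{-1}X_{w_k}$ at $eP$ is equivalent to $\bigcap_k l_kT_{w_k}=0$, i.e.\ to the evaluation map $\Phi_{(l_k)}\colon V\to\bigoplus_{k=1}^s V/(l_kT_{w_k})$ being an isomorphism. Thus $L_P$‑movability supplies a generic $(l_k)$ for which, after taking top exterior powers (using again $\sum_k\codim X_{w_k}=\dim V$), the induced map of lines $\Lambda\colon\bigwedge^{\rm top}V\to\bigotimes_{k=1}^s\bigwedge^{\rm top}\!\big(V/(l_kT_{w_k})\big)$ is nonzero.

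The key step is to feed a central cocharacter into $\Lambda$. Fix $\alpha_i\in\Delta\setminus\Delta(P)$ and let $\gamma\colon\mathds{C}^*\to H$ be the cocharacter acting on $\gothG_\mu$ by $t^{\mu(x_i)}$. Since $\alpha_i\notin\Delta(P)$, every root in $R_P$ vanishes on $x_i$, so $\gamma(t)$ is central in $L_P$; hence $\gamma(t)\cdot(l_kT_{w_k})=l_k\cdot\gamma(t)T_{w_k}=l_kT_{w_k}$, so $\gamma$ stabilizes each subspace $l_kT_{w_k}$ of $V$. Therefore $\Phi_{(l_k)}$, and hence $\Lambda$, is $\gamma$‑equivariant; being a nonzero equivariant map of lines, $\Lambda$ forces the $\gamma$‑weights of its source and target to agree. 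It then remains to compute these weights using the identity $\sum_{\beta\in R^+,\,w\beta\in R^-}\beta=\rho-w^{-1}\rho$, valid for $w\in W^P$ (the indexing set then lies in $R^+\setminus R_P^+$). The $\gamma$‑weight of $\bigwedge^{\rm top}V$ is $-\big(\sum_{\beta\in R^+\setminus R_P^+}\beta\big)(x_i)=-(2\rho-2\rho^P)(x_i)=-\chi^P_1(x_i)$. The $\gamma$‑weight of $\bigwedge^{\rm top}(l_kT_{w_k})$ is independent of $l_k$ (connectedness of $L_P$ and centrality of $\gamma$), hence equals that of $\bigwedge^{\rm top}T_{w_k}$, namely $-(\rho-w_k^{-1}\rho)(x_i)$; and since $\gamma$ stabilizes $l_kT_{w_k}$ one has $\bigwedge^{\rm top}V\cong\bigwedge^{\rm top}(l_kT_{w_k})\otimes\bigwedge^{\rm top}\!\big(V/(l_kT_{w_k})\big)$ as $\gamma$‑modules, whence the weight of $\bigwedge^{\rm top}(V/(l_kT_{w_k}))$ is $-\chi^P_1(x_i)+(\rho-w_k^{-1}\rho)(x_i)=-\chi^P_{w_k}(x_i)$, using $\chi^P_1-\chi^P_{w_k}=\rho-w_k^{-1}\rho$. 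Summing over $k$, equality of weights reads $-\chi^P_1(x_i)=-\big(\sum_{k=1}^s\chi^P_{w_k}\big)(x_i)$, which is the asserted relation.

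I expect the main difficulty to be purely bookkeeping: getting every sign right in the two root‑theoretic inputs — the description of $T_w$ as a sum of negative root spaces and the identity $\sum\beta=\rho-w^{-1}\rho$ — and checking carefully that transversality at $eP$ together with the codimension hypothesis is exactly the vanishing $\bigcap_k l_kT_{w_k}=0$. One should also verify the small but slippery claim that the $\gamma$‑weight of $\bigwedge^{\rm top}(l_kT_{w_k})$ does not depend on $l_k\in L_P$, which is where connectedness of $L_P$ and the centrality of $\gamma$ are both used. Beyond these, the argument requires nothing deeper than the Kleiman‑type genericity already built into the definition of $L_P$‑movability and elementary representation theory of a torus.
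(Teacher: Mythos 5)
Your proof is correct. Note, though, that the paper does not actually prove this proposition --- it is quoted from Belkale--Kumar \cite[Theorem 15]{BK06} with only the remark that the proof there uses geometric invariant theory (a comment that really pertains to the harder converse direction of their theorem). The elementary argument you give --- Levi-movability and the codimension hypothesis make $V\to\bigoplus_k V/l_kT_{w_k}$ an isomorphism; the one-parameter subgroup $\gamma$ dual to $\alpha_i$ is central in $L_P$, hence stabilizes each $l_kT_{w_k}$; and equating weights on the induced nonzero map of determinant lines yields $\chi^P_1(x_i)=\sum_k\chi^P_{w_k}(x_i)$ --- is precisely the technique the paper itself deploys in its proofs of Theorems \ref{Thm2} and \ref{Thm2branching}, so your argument is best seen as the natural self-contained substitute for the citation rather than a different route. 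All the supporting identities you invoke check out: $T_w=\bigoplus_{\beta\in R^+\setminus R_P^+,\ w\beta\in R^-}\gothG_{-\beta}$, the formula $\sum_{\beta\in R^+,\ w\beta\in R^-}\beta=\rho-w^{-1}\rho$, the equality $\chi^P_1-\chi^P_w=\rho-w^{-1}\rho$, and the $l$-independence of the $\gamma$-weight on $\bigwedge^{\rm top}(lT_{w_k})$. One small technical caveat: $x_i$ is a fundamental coweight and need not lie in the cocharacter lattice of $H$ even for $G$ simply connected (already for $SL_2$ it does not), so $\gamma\colon\mathds{C}^*\to H$ as you define it may not exist; either replace $x_i$ by a suitable positive integer multiple (the conclusion is unaffected), or --- as the paper does in the proof of Theorem \ref{Thm2} --- phrase the argument with the $\gothH$-action on the determinant lines and evaluate the resulting equality of $\gothH$-weights at $x_i\in\gothH$.
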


\subsection{Proof of Theorem \ref{Thm2}}

Recall that by Lemma \ref{lemma_wuv}, for any $w\in W^P$, we have $w=uv$ such that $u\in W^Q$ and $v\in W^P\cap W_Q$.  For any pair of parabolic subgroups $P\subseteq Q$, let $T^P:=\Te(G/P)$ and $T^{P,Q}:=\Te(Q/P)$.  For any $w\in W^P$ and $p\in P$ we have the subspace $pT^P_w:=\Te(pw^{-1}X_w)\subseteq T^P$.  The condition for Levi-movability is equivalent to the condition that the diagonal map $$\phi:T^P\rightarrow\bigoplus_{k=1}^sT^P/l_kT^P_{w_k}$$ is an isomorphism for generic $(l_1,\ldots,l_s)\in (L_P)^s$.  Consider the diagram
\begin{equation}\label{comm_tangent}\xymatrix{\quad\, T^{P,Q}\ \ar@{^{(}->}[r] \ar[d]^-{\phi_2}&\quad T^P\ \ar@{->>}[r] \ar[d]^-{\phi} &\quad T^Q\ \ar[d]^-{\phi_1}\\
\displaystyle\bigoplus_{k=1}^s \frac{T^{P,Q}}{l_kT^{P,Q}_{v_k}}\ \ar@{^{(}->}[r] &\ \displaystyle\bigoplus_{k=1}^s \frac{T^P}{l_kT^P_{w_k}}\ \ar@{->>}[r]&\ \displaystyle\bigoplus_{k=1}^s \frac{T^Q}{l_kv_k^{-1}T^Q_{u_k}}}\end{equation} where $\phi_1$ and $\phi_2$ are the diagonal maps corresponding to $G/Q$ and $Q/P$.  It suffices to show that if $\phi$ is an isomorphism, then $\phi_1$ and $\phi_2$ are isomorphisms.

\smallskip

Fix $(l_1,\ldots,l_s)\in (L_P)^s$ so that $\phi$ is an isomorphism.  By the commutativity of the diagram \eqref{comm_tangent}, $\dim\coker\phi_1=0,$ since $\dim\coker\phi=0$.  If we assume that $\dim\ker\phi_1=0$, then $\phi_1$ is an isomorphism which proves part (1).  Since $\phi$ is injective, $\phi_2$ is also injective. By the snake lemma, we have that $$\dim\ker\phi_1=\dim\coker\phi_2=0.$$  Hence $\phi_2$ is an isomorphism which proves part (2).  We now prove that $\dim\ker\phi_1=0$.  Since $\phi_1$ is surjective, the map $$\phi_1: T^Q/\ker{\phi_1}\rightarrow \bigoplus_{k=1}^s\frac{T^Q}{l_iv_k^{-1}T^Q_{u_k}}$$ is an isomorphism.  As a consequence, the induced map on top exterior powers:
$$\Phi_1:\det(T^Q/\ker{\phi_1})\rightarrow \det(\bigoplus_{k=1}^s\frac{T^Q}{l_kv_k^{-1}T^Q_{u_k}})$$ is nonzero.  Identifying the character group $X(H)$ with the weight lattice in $\gothH^*$ we have that $\gothH$ acts on the complex line $\det(T^Q/\ker{\phi_1})$ by the character $-\chi^Q_1-\beta$ where $\beta$ is the sum of roots in $\ker\phi_1$.  Similarly, we have that $\gothH$ acts diagonally on $\det(\displaystyle\bigoplus_{k=1}^s\frac{T^Q}{l_kv_k^{-1}T^Q_{u_k}})$ by the character $\displaystyle -\sum_{i=1}^s\chi^Q_{u_i}.$  It is easy to see that the map $\Phi_1$ is equivariant with respect to the action of the center of $L_Q$.  In particular, for any $\alpha_i\in\Delta\backslash\Delta(Q)$, we have $$(\chi^Q_1+\beta)(x_i)=\sum_{k=1}^s\chi^Q_{u_k}(x_i).$$  For any $w=uv\in W^P$ and $\alpha_i\in\Delta\backslash\Delta(Q)$, we have
\begin{eqnarray*}\chi_w^P(x_i)&=&(\rho-2\rho^P)(x_i)+w^{-1}\rho(x_i)\\ &=&\rho(x_i)-\rho(uv x_i)\\ &=&(\rho-2\rho^Q)(x_i)+u^{-1}\rho(x_i)\\ &=&\chi_u^Q(x_i)\end{eqnarray*} since the Weyl group $W_Q$ acts trivially on $x_i$ and $\rho^P(x_i)=\rho^Q(x_i)=0$.  Hence, by Proposition \ref{Bel_Ku Thm}, we have $$\beta(x_i)=\big((\sum_{k=1}^s\chi^Q_{u_i})-\chi^Q_1\big)(x_i)=\big((\sum_{i=1}^s\chi^P_{w_i})-\chi^P_1\big)(x_i)=0$$  for all $\alpha_i\in\Delta\backslash\Delta(Q).$  But $$\ker\phi_1\subseteq T^Q=\bigoplus_{\alpha\in R^-\backslash R^-_Q} \gothG_{\alpha}$$ where $\gothG_{\alpha}$ denotes the root space of $\gothG$ corresponding to $\alpha.$   Hence $-\beta$ is a positive linear combination of positive simple roots in $\Delta\backslash\Delta(Q).$  This implies that $\dim\ker\phi_1=0.$  This proves Theorem \ref{Thm2}.\hfill$\Box$

\section{Branching Schubert calculus}\label{Branching}

In this section we generalize Theorems \ref{Thm1} and \ref{Thm2} to the setting of branching Schubert calculus.  These generalizations are stated in Theorems \ref{Thm1branching} and \ref{Thm2branching}.  Since the proofs are similar to those for Theorems \ref{Thm1} and \ref{Thm2}, we leave several details to the reader.  Let $\tilG$ be any connected semisimple subgroup of $G$ and fix maximal tori and Borel subgroups $\tilH\subseteq\tilB\subseteq\tilG$ and $H\subseteq B\subseteq G$ such that $\tilH=H\cap\tilG$ and $\tilB=B\cap\tilG$.  As in Theorem \ref{Thm1}, we consider a pair of parabolic subgroups $P\subseteq Q$ in $G$ which contain $B$.   Define parabolic subgroups $$\tilP:=P\cap\tilG\quad \mbox{and}\quad \tilQ:=Q\cap\tilG$$ and consider the maps $$\phi:\tilG/\tilP\hookrightarrow G/P$$ $$\phi_1:\tilG/\tilQ\hookrightarrow G/Q$$ $$\phi_2:\tilQ/\tilP\rightarrow Q/P$$ defined by $\phi(g\tilP):=gP$, $\phi_1(g\tilQ):=gQ$ and $\phi_2:=\phi|_{\tilQ/\tilP}$.  Consider the following commutative diagram:

\begin{equation}\label{branching commute}\xymatrix{\tilQ/\tilP\ \ar@{^{(}->}[r] \ar[d]^-{\phi_2}&\ \tilG/\tilP\ \ar@{->>}[r]^-{\pi} \ar[d]^-{\phi} &\ \tilG/\tilQ\ar[d]^-{\phi_1}\\
Q/P\ \ar@{^{(}->}[r] &\ G/P\ \ar@{->>}[r]&\ G/Q}\end{equation}

For any $w\in W^P$ such that $\dim X_w=\dim G/P-\dim\tilG/\tilP$, we have the associated structure constant $c_w\in\mathds{Z}_{\geq 0}$ defined by $$\phi^*([X_w])=c_w[pt].$$  By Lemma \ref{lemma_wuv}, we can write $w=uv$ where $u\in W^Q$ and $v\in W^P\cap W_Q$.  We show that if condition \eqref{codim condition2} is satisfied, then $c_w=c_u\cdot c_v$ where $$\phi_1^*([X_u])=c_u[pt]\quad \mbox{and}\quad \phi_2^*([X_v])=c_v[pt].$$

\subsection{Proof of Theorem \ref{Thm1branching}}

If $w\in W^P$ satisfies condition \eqref{codim condition2}, then there exists a nonempty open subset $O_1\subseteq G/B$, such that if $\bar g \in O_1$, then the cardinality of inverse images $$|\phi^{-1}(gX_w)|=c_w\quad \mbox{and}\quad |\phi_1^{-1}(gX_u)|=c_u.$$  Consider the projection $\pi:\tilG/\tilP\rightarrow\tilG/\tilQ$.  By the commutativity of diagram \eqref{branching commute}, we have that $\pi(\phi^{-1}(gX_w))\subseteq\phi_1^{-1}(gX_u)$.  Hence if $c_u=0$, then $c_w=0$.

\smallskip

Assume that $c_u\neq 0$.  It suffices to show that for generic $\bar g\in G/B$, the map $\pi$ restricted to $\phi^{-1}(gX_w)$ is surjective when $c_w\neq 0$ and for any $\bar h\in\phi_1^{-1}(gX_u)$, we have $|\pi^{-1}(\bar h)\cap\phi^{-1}(gX_w)|=c_v$.  Following the proof of Theorem \ref{Thm1}, we define the set $$Y:=\{(\bar h,\bar g)\in \tilG/\tilQ\times G/B\ |\ \phi(\bar h)\in gX_u\}.$$  By an analogue of Lemma \ref{Lemma Ysmooth}, the set $Y$ is a smooth irreducible $\tilG$-variety.   Similarly, by an analogue of Proposition \ref{Prop P1 open}, the set of points in $Y$ with the following property $P2$ contains a nonempty open subset of $Y$.

\begin{definition}We say $(\bar h,\bar g)\in Y$ has property $P2$ if:
\begin{enumerate}
\item The intersection $(h^{-1}gX_w\cap Q/P)\cap \phi_2(\tilQ/\tilP)$ is transverse at every point in $Q/P.$
\item For any $q\in Q$ such that $h^{-1}gX_u=qu^{-1}X_u\subseteq G/Q$, the intersection $$qX_v\cap\phi_2(\tilQ/\tilP)=q\bar X_v\cap \phi_2(\tilQ/\tilP)\subseteq Q/P.$$\end{enumerate}\end{definition}

Let $Y^{\circ}\subseteq Y$ be a nonempty open set whose points have property $P2$ and let $\sigma:Y\rightarrow G/B$ denote the projection onto the second factor of $Y$.  By the proof of Theorem \ref{Thm1}, the set $O_2:=(G/B)\backslash\overline{\sigma(Y\backslash Y^{\circ})}$ is an open subset of $G/B$.  Moveover, if $g\in O_1\cap O_2$ and $c_w\neq 0$, then $\pi(\phi^{-1}(gX_w))=\phi_1^{-1}(gX_u)$. By \cite[Lemma 1]{BK06}, we can choose $q\in Q$ such that $h^{-1}gX_w=qu^{-1}X_w$.  By Lemma \ref{Lemma_intersection}, for any $\bar h\in\phi_1^{-1}(gX_u)$, we have \begin{equation}\label{equation_cap2}|\pi^{-1}(\bar h)\cap\phi^{-1}(gX_w)|=|qu^{-1}X_w\cap Q/P\cap \phi_2(\tilQ/\tilP)|=|qX_v\cap\phi_2(\tilQ/\tilP)|=c_v.\end{equation}  If $c_w=0$, then equation \eqref{equation_cap2} implies that $c_v=0$.  In either case, $c_w=c_u\cdot c_v$.  This proves Theorem \ref{Thm1branching}.\hfill$\Box$

\subsection{Proof of Theorem \ref{Thm2branching}}

Let $\tilR$ denote the set of roots of $\tilG$ with respect to the torus $\tilH$ and let $\tilR^+$ denote the set of positive roots with respect to the Borel $\tilB$.  Let $\Delta_{\tilG}:=\{\tilde\alpha_1,\ldots,\tilde\alpha_m\}\subseteq\tilR^+$ denote the of simple roots of $\tilG$ where $m$ is the rank of $\tilG$.  Let $\{\tilde x_1,\ldots,\tilde x_m\}\subseteq\tilde\gothH$ denote the dual basis to $\Delta_{\tilG}$.  For any parabolic subgroup $\tilQ\subseteq \tilG$ which contains $\tilB$, let $\tilR^+_{\tilQ}$ denote the positive roots of of $\tilQ$ or $L_{\tilQ}$ and let $\Delta_{\tilQ}:=\Delta_{\tilG}(\tilQ)\subseteq\Delta_{\tilG}$ denote the corresponding simple roots.  Consider the following diagram which is analogous to \eqref{comm_tangent}.  By an abuse of notation we will use $\phi, \phi_1,\phi_2$ to denote the induced map on Lie algebras.
\begin{equation}\label{comm_tangent_branching}\xymatrix{\quad\, \tilde T^{P,Q}\ \ar@{^{(}->}[r] \ar[d]^-{\phi_2}&\quad \tilde T^P\ \ar@{->>}[r] \ar[d]^-{\phi} &\quad \tilde T^Q\ \ar[d]^-{\phi_1}\\ \displaystyle\frac{T^{P,Q}}{lT^{P,Q}_v}\ \ar@{^{(}->}[r] &\  \displaystyle \frac{T^P}{lT^P_w}\ \ar@{->>}[r]&\ \displaystyle\frac{T^Q}{lv^{-1}T^Q_u}}\end{equation}
Since $w\in W^P$ is $(L_P,\phi)$-movable, the map $\phi$ is an isomorphism for general $l\in L_P$.  By the snake lemma, it suffices to show that $\phi_1$ is injective.  Let $\beta\in\tilde\gothH^*$ denote the sum of roots corresponding to $\ker\phi_1$.  Following the proof of Theorem \ref{Thm2}, it suffices to show that $\beta(\tilde x_i)=0$ for all $\tilde\alpha_i\in\Delta_{\tilG}\backslash\Delta_{\tilQ}$ since $\ker\phi_1\subseteq \tilde T^Q$.  Consider the group $$C:=\tilH\cap Z(L_Q)$$ where $Z(L_Q)$ denotes the center of $L_Q$. Observe that $C\subseteq Z(L_{\tilQ})$ and that $\Lie(C)=\tilde\gothH\cap \mathfrak{Z}$ where $\mathfrak{Z}$ denotes the Lie algebra of $Z(L_Q)$.   Since $C\subseteq\tilH$, we have induced $C$-module structures on $\tilde T^P, \tilde T^Q, \tilde T^{P,Q}$.  Similarly, since $C\subseteq Z(L_Q)$, we have induced $C$-module structures on $T^P, T^Q, T^{P,Q}.$  It is easy to see that the maps $\phi, \phi_1$ and $\phi_2$ are $C$-equivariant.  Since $\phi$ is an isomorphism and $\phi_1$ is surjective, the induced $C$-equivariant maps $$\Phi:\det(\tilde T^P)\rightarrow \det(T^P/lT^P_w)$$ and $$\Phi_1:\det(\tilde T^Q/\ker\phi_1)\rightarrow \det(T^Q/lv^{-1}T^Q_u)$$ are nonzero.  Let $i:\tilG\hookrightarrow G$ denote the embedding of $\tilG$ into $G$ and define the character $$\tilde\chi^{\tilP}:= 2(\tilde\rho-\tilde\rho^{\tilP})$$ where $\tilde\rho$ is the half sum of all roots in $\tilR^+$ and $\tilde\rho^{\tilP}$ is the half sum of all roots in $\tilR^+_{\tilP}$.  We have that $\tilde\gothH$ acts on $\det(\tilde T^P)$ by the character $-\tilde\chi^{\tilP}$.  For any $\tau\in\Lie(C)$ we have $$\beta(\tau)=\big(i^*\chi^Q_u-\tilde\chi^{\tilQ}\big)(\tau)=\big(i^*\chi^P_w-\tilde\chi^{\tilP}\big)(\tau)=0$$
since the isomorphisms $\Phi$ and $\Phi_1$ are $C$-equivariant.  By assumptions in Theorem \ref{Thm2branching}, there exists a vector $\tau_0\in\Lie(C)$ such that $\alpha(\tau_0)\geq 0$ for any $\alpha\in\Delta_{\tilG}$ with equality if and only if $\alpha\in\Delta_{\tilQ}$.  This implies that $\beta(\tilde x_i)=0\quad \forall\ \tilde\alpha_i\in\Delta_{\tilG}\backslash\Delta_{\tilQ}$ and hence $\dim\ker\phi_1=0$.  This proves Theorem \ref{Thm2branching}.\hfill$\Box$

\bibliographystyle{abbrv}
\bibliography{references}

\end{document}